\pdfoutput=1
\documentclass[a4paper]{article}

\usepackage{fullpage}
\usepackage{microtype}

\usepackage[sorting=nyt]{biblatex}
\addbibresource{omniperiodicity.bib}
\addbibresource{osg.bib}

\usepackage{graphicx}
\graphicspath{ {./images/} }

\usepackage{enumitem}

\usepackage{booktabs}
\usepackage{longtable}

\usepackage{amsthm}
\newtheorem*{theorem*}{Theorem}
\usepackage{amsmath}
\DeclareMathOperator{\lcm}{lcm}

\usepackage[dvipsnames]{xcolor}
\definecolor{darkgreen}{rgb}{0,0.45,0}
\usepackage[colorlinks,citecolor=darkgreen,linkcolor=NavyBlue,urlcolor=NavyBlue]{hyperref}

\newcommand{\patcols}[3]{
\noindent\begin{minipage}[t]{0.2\textwidth}
    \vspace{0pt}
    #2
  \end{minipage}\hfill
  \begin{minipage}[t]{0.75\textwidth}
    \vspace{0pt}
    \raggedright
    {\noindent\normalfont\bfseries #1} \\
    \vspace{3pt}
    \small
    \texttt{#3}
  \end{minipage}
  \vspace{50pt}
}

\newcommand{\patstack}[3][0.15]{
\begin{minipage}[t]{#1\textwidth}
    \begin{center}
    \vspace{0pt}
    #3 \\
    {\noindent\normalfont\bfseries #2}
    \end{center}
  \end{minipage}
}

\usepackage[status=draft,inline,nomargin]{fixme}
\FXRegisterAuthor{mvr}{anmvr}{\color{blue}MVR}
\FXRegisterAuthor{mk}{anmk}{\color{green}Maia}
\FXRegisterAuthor{sok}{ansok}{\color{violet}Sokwe}
\FXRegisterAuthor{cc}{acc}{\color{red}carsoncheng}
\FXRegisterAuthor{pzq}{anpzq}{\color{cyan}pzq}

\title{Conway's Game of Life is Omniperiodic}
\date{December 5, 2023}
\iffalse
\author{
Nico Brown \vspace{-0.3em} \\ {\small \href{mailto:nicobrownmath@gmail.com}{\texttt{nicobrownmath@gmail.com}}} \and 
Carson Cheng \vspace{-0.3em} \\ {\small \href{mailto:carsonwhatif@gmail.com}{\texttt{carsonwhatif@gmail.com}}} \and 
Tanner Jacobi \vspace{-0.3em} \\ {\small \href{mailto:tanner.g.jacobi@gmail.com}{\texttt{tanner.g.jacobi@gmail.com}}} \and 
Maia Karpovich \vspace{-0.3em} \\ {\small \href{mailto:hcivoprak@gmail.com}{\texttt{hcivoprak@gmail.com}}} \and 
Matthias Merzenich \vspace{-0.3em} \\ {\small \href{mailto:merzenich.matthias@gmail.com}{\texttt{merzenich.matthias@gmail.com}}} \and 
David Raucci \vspace{-0.3em} \\ {\small \href{mailto:davidraucci122@gmail.com}{\texttt{davidraucci122@gmail.com}}} \and 
Mitchell Riley\thanks{Contact author.} \vspace{-0.3em} \\ {\small \href{mailto:mitchell.v.riley@gmail.com}{\texttt{mitchell.v.riley@gmail.com}}}
}
\fi

\author{
Nico Brown \vspace{-0.3em} \\ {\small \href{mailto:nicobrownmath@gmail.com}{\texttt{nicobrownmath@gmail.com}}} \vspace{0.5em} \\
Maia Karpovich \vspace{-0.3em} \\ {\small \href{mailto:hcivoprak@gmail.com}{\texttt{hcivoprak@gmail.com}}} \and 
Carson Cheng \vspace{-0.3em} \\ {\small \href{mailto:carsonwhatif@gmail.com}{\texttt{carsonwhatif@gmail.com}}}\vspace{0.5em} \\
Matthias Merzenich \vspace{-0.3em} \\ {\small \href{mailto:merzenich.matthias@gmail.com}{\texttt{merzenich.matthias@gmail.com}}}\vspace{0.5em} \\
Mitchell Riley\thanks{Contact author.} \vspace{-0.3em} \\ {\small \href{mailto:mitchell.v.riley@gmail.com}{\texttt{mitchell.v.riley@gmail.com}}} \and
Tanner Jacobi \vspace{-0.3em} \\ {\small \href{mailto:tanner.g.jacobi@gmail.com}{\texttt{tanner.g.jacobi@gmail.com}}}\vspace{0.5em} \\
David Raucci \vspace{-0.3em} \\ {\small \href{mailto:davidraucci122@gmail.com}{\texttt{davidraucci122@gmail.com}}}
}

\begin{document}

\maketitle

\iffalse
% For arxiv:
\begin{abstract}
    In the theory of cellular automata, an oscillator is a pattern that repeats itself after a fixed number of generations; that number is called its period. A cellular automaton is called omniperiodic if there exist oscillators of all periods.

    At the turn of the millennium, only twelve oscillator periods remained to be found in Conway's Game of Life. The search has finally ended, with the discovery of oscillators having the final two periods, 19 and 41, proving that Life is omniperiodic.

    Besides filling in the missing periods, we give a detailed history of the omniperiodicity problem and the strategies used to solve it, summarising the work of a large number of people in the decades since the creation of Life.
\end{abstract}
\fi

In the theory of cellular automata, an \emph{oscillator} is a pattern that repeats itself after a fixed number of generations; that number is called its \emph{period}. A cellular automaton is \emph{omniperiodic} if there exist oscillators of all periods.

Conway's Game of Life is by far the most famous cellular automaton. David Buckingham first established a finite bound above which oscillators of every period could be built by running a signal around a specially constructed track. For periods below that cutoff, oscillators need to be found individually.

At the turn of the millennium, only twelve periods remained to be found: 19, 23, 27, 31, 34, 37, 38, 39, 41, 43, 51 and 53. Over the last couple of decades, these periods were gradually filled in as increasing computer speed and more clever search techniques were brought to bear on the problem.

\medskip
\noindent The search has finally ended, with the discovery of oscillators having the final two periods, 19 and 41. A table with oscillators of all periods is provided \hyperref[sec:table]{below}, proving:
% Or: 
% The search has finally ended, with the discovery of a period 41 oscillator:
% They were found within a week of each other we can probably lump them together.

\begin{theorem*}
  Life is omniperiodic. \qed
\end{theorem*}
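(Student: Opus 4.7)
The plan is to prove the theorem by exhibition: for each positive integer $p$, we must produce at least one pattern whose evolution under the Game of Life rule returns to itself after exactly $p$ generations. Since ``omniperiodic'' demands every period, the argument naturally splits into two regimes, separated by Buckingham's cutoff mentioned above.

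First I would invoke Buckingham's track construction to dispense, in one stroke, with all periods above the finite bound he established. Concretely, this means citing his result that a signal can be routed around a closed loop of arbitrary (sufficiently large) length built from standard components, so that the period of the resulting stable oscillator can be tuned to any value $p$ beyond the cutoff. No further search is needed in this regime; the construction is uniform in $p$.

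Second, for each period $p$ below Buckingham's cutoff I would exhibit an explicit oscillator. Most of these have been known for decades, and the bulk of the work of this paper is to collect them into one place --- the table advertised in Section~\ref{sec:table}. For each tabulated pattern, correctness is a finite and mechanical check: evolve the initial configuration for $p$ generations under Life's $B3/S23$ rule and verify bitwise that the result equals the starting configuration, while also checking that no proper divisor of $p$ already maps the pattern to itself (so that the minimal period is indeed $p$). This verification is routine and can be performed by any Life simulator.

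The main obstacle, and the reason the theorem was open for half a century, is purely the existence of oscillators for the twelve holdout periods $19, 23, 27, 31, 34, 37, 38, 39, 41, 43, 51, 53$. Among these, periods $19$ and $41$ resisted all attacks until now. I would therefore devote the core of the proof to describing the newly discovered period-$19$ and period-$41$ oscillators in enough detail --- coordinates of live cells, or a compact RLE encoding --- that the reader (or a computer) can reconstruct them and rerun the $p$-generation evolution check. Once these two patterns are displayed and verified, together with the previously known oscillators at the other ten periods and Buckingham's uniform construction above the cutoff, every positive integer period is accounted for and the theorem follows. \qed
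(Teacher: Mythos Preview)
Your proposal is correct and follows essentially the same two-part strategy as the paper: a uniform loop construction above a finite cutoff, plus an explicit tabulation of oscillators below it, with verification reduced to finite simulation. The only notable difference is that the paper uses the Snark loop (proved self-contained to cover all $p \geq 43$) rather than Buckingham's original Herschel-track bound of $p \geq 61$ as the cutoff; this tightens the range requiring individual exhibits, but the logical shape of the argument is identical.
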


\section{Conway's Game of Life}

Conway's Game of Life~\cite{gardner:1970} is a cellular automaton occurring on an infinite plane of square grid cells, each of which is in one of two states: alive or dead. The neighbourhood of a cell is the 8 cells that are connected orthogonally or diagonally to it. At each time step, or generation, the entire plane changes state according to the following rules:

\begin{itemize}[noitemsep,topsep=3pt]
\item If a dead cell has exactly three live neighbours, it becomes alive.
\item If an alive cell has exactly two or three live neighbours, it stays alive. Otherwise, it becomes dead.
\end{itemize}

These simple rules give rise to complex behavior which has been studied in great detail over the last 50 years. Starting with a random initial state, running Life forwards in time typically causes an initial burst of chaotic activity that settles down into small patterns of alive cells. Two basic arrangements are the block, a 2\texttimes{}2 grid of alive cells, which does not change; and the blinker, a 1\texttimes{}3 line of alive cells, which alternates between horizontal to vertical at each time step. Vastly more complex patterns exist, both naturally occurring and intentionally engineered. In particular, this includes working universal constructors~\cite[\S 11]{life:book} and computers~\cite[\S 9]{life:book} \cite{rendell:turing-machine}.

\begin{center}
\hfill
\patstack{(p1) Block}{\href{https://conwaylife.com/?rle=2o$2o!&name=(p1) block}{\includegraphics[width=\textwidth]{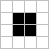}}}
\hfill
\patstack{(p2) Blinker}{\href{https://conwaylife.com/?rle=3o!&name=(p2) blinker}{\includegraphics[width=\textwidth]{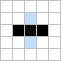}}}
\hfill
\patstack{Glider}{\href{https://conwaylife.com/?rle=bo$2bo$3o!&name=glider}{\includegraphics[width=\textwidth]{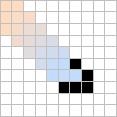}}}
\hfill
~
\end{center}

Some Life patterns return to their starting configuration after a finite amount of time. Such patterns are often categorised by their \emph{period}: the number of generations after which the state of the pattern repeats. Patterns that have a period of 1, like the block, are called \emph{still lifes}. Some patterns, like the blinker, have period 2 or more; these are called \emph{oscillators}. Others repeat their state but translate in the plane; such patterns are called \emph{spaceships}.  For example, the famous glider has period 4, but moves 1 cell diagonally in those 4 generations.

The final states of random soups often contain oscillators of period 2 (such as the blinker), sometimes period 3 (such as the pulsar), and occasionally even 15 (the pentadecathlon). Oscillators with other periods are much rarer, and are generally searched for specifically rather than falling out of random initial states. Nevertheless, it has long been conjectured that oscillators of \emph{every} period $p \geq 1$ exist in Life\footnote{The period of an oscillator is almost universally denoted by $p$ but we do not mean that $p$ is always prime, or that it is somehow sufficient to consider only prime periods.}; i.e. that Life is \emph{omniperiodic}. Determining whether this is the case has been referred to as an open problem as far back as we have records of hobbyist discussions~\cite{wechsler:missions}. 
%\mvrnote{E.g. the mailing list in 1992. Are there earlier mentions of it in writing?}

One aspect that makes the problem so intriguing is that omniperodicity has long been settled for rules extremely similar to Life. For example, if the rules are changed so that an alive cell also remains alive when it has zero or one neighbours, omniperiodicity was proven in 1997~\cite{b3s0123-omniperiodic}. If instead we also allow four-neighbour cells to survive, the question was settled in 2010~\cite{b3s234-omniperiodic}. Both were achieved using ``drifter'' techniques (which we will be covering later). 

Low-period oscillators in Life, roughly $p \leq 15$, can be found by playing with patterns by hand or using brute force computer searches. In 1996, David Buckingham demonstrated~\cite{buckingham:conduits} using his ``Herschel conduits'' that one can construct oscillators with $p \geq 61$ by sending signals around a closed track; the cutoff for systematically constructing oscillators was later improved to $p \geq 43$, by Mike Playle's discovery of the Snark~\cite{snark}. 

Periods in the ``missing middle'', $15 < p < 43$, particularly those that are prime, proved more difficult to find. From 1996 and concluding in July 2023, a sequence of discoveries using new mechanisms and improved search algorithms filled in the remaining gaps. The techniques include: stabilizing infinitely repeating patterns, hassling common active objects, finding oscillating perturbations of stable background patterns, multiplying the period of low-period oscillators and finding dependent glider loops. Together, they suffice to cover all $p < 43$, proving that Life is omniperiodic.

\paragraph{Acknowledgements} Thank you to Dave Greene, Nathaniel Johnston, Darren Li and Conwaylife.com forum user `praosylen' for corrections and suggestions, and to Chris Rowett for the use of his LifeViewer software to depict each pattern. Thank you also to everyone who has contributed to the omniperiodicity problem over the years by making new discoveries, proposing and implementing new approaches, and running the searches that make these discoveries possible. The discoverers of the first oscillator of each period are listed in the \hyperref[sec:table]{Appendix}.
%This collaboration was completed with the help of services provided by the OSG Consortium \cite{osg07, osg09, https://doi.org/10.21231/0kvz-ve57, https://doi.org/10.21231/906p-4d78}, which is supported by the National Science Foundation awards \#2030508 and \#1836650.

\section{A History of Oscillator Searches}

In the following sections we describe the techniques listed above, in roughly the chronological order in which they were developed. In the PDF version of this paper, each image links to a page with the pattern loaded into \href{https://lazyslug.com/lifeviewer/}{LifeViewer}, which can be used to simulate the pattern forward in time. There is no substitute for seeing the patterns in motion, and we encourage the reader to experiment with any pattern that interests them.

Oscillators are depicted with a colour ramp on the dead cells, fading from blue to orange the longer the cell has stayed dead. The number of generations taken to reach orange in each image is matched to the period of the oscillator, so the colour ramp can be used to roughly gauge where the active part of each oscillator is moving in the generation it is displayed: away from the blue region (where it has just been) and towards the orange region (which it must at some point reach).

\begin{center}
\patstack[0.3]{An infinite p19 oscillator}{\href{https://conwaylife.com/?rle=bo$2bo$3o3$5bo$6b2o$5b2o3$11bo$9bobo$10b2o3$14bobo$15b2o$15bo2$20bo$21bo$19b3o!&name=(A segment of) An infinite p19 oscillator}{\includegraphics[width=\textwidth]{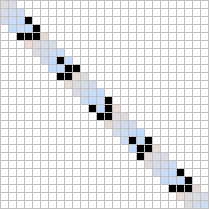}}}
\end{center}

We restrict our attention to \emph{finite} oscillators, because omniperiodicity for infinite oscillators is nearly trivial. An infinite stream of evenly-spaced gliders forms an oscillator, repeating when each glider reaches the position of the next. This gives an easy construction for all periods $p \geq 14$; the tightest the gliders can be packed without interfering. Oscillators with all remaining periods $p < 14$ were known by the mid-1970s.

% \ccnote{maybe add figure numbers (like figure 1, 2, ...)? That is commonly used in research papers}

\subsection{Handcrafted Oscillators, Billiard Tables and Hasslers}\label{sec:manual-searches}

\begin{center}
\patstack{(p3) Pulsar}{\href{https://conwaylife.com/?rle=2b3o3b3o2$o4bobo4bo$o4bobo4bo$o4bobo4bo$2b3o3b3o2$2b3o3b3o$o4bobo4bo$o4bobo4bo$o4bobo4bo2$2b3o3b3o!&name=(p3) pulsar}{\includegraphics[width=\textwidth]{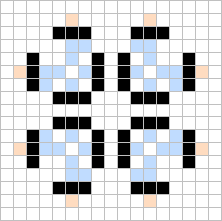}}}
\hfill
\patstack{(p5) Octagon 2}{\href{https://conwaylife.com/?rle=3b2o$2bo2bo$bo4bo$o6bo$o6bo$bo4bo$2bo2bo$3b2o!&name=(p5) octagon 2}{\includegraphics[width=\textwidth]{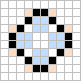}}}
\hfill
\patstack{(p8) Figure eight}{\href{https://conwaylife.com/?rle=3o$3o$3o$3b3o$3b3o$3b3o!&name=(p8) figure eight}{\includegraphics[width=\textwidth]{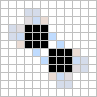}}}
\hfill
\patstack{(p14) Tumbler}{\href{https://conwaylife.com/?rle=2o3b2o$obobobo$obobobo$2bobo$b2ob2o$b2ob2o!&name=(p14) tumbler}{\includegraphics[width=\textwidth]{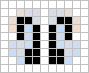}}}
\hfill
\patstack{(p15) \\ Pentadecathlon}{\href{https://conwaylife.com/?rle=2bo4bo$2ob4ob2o$2bo4bo!&name=(p15) pentadecathlon}{\includegraphics[width=\textwidth]{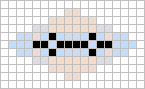}}}
\end{center}

The first few oscillators in Life were found through experimentation with small starting configurations, often evolved by hand on graph paper or covertly on corporate computers. The block (period 1) and blinker (p2) were found almost immediately, descending directly from the two triominoes. Other early oscillators were found from the lucky evolution of simple starting configurations, such as the pulsar (p3), figure eight (p8), and pentadecathlon (p15) found by Conway's team, the tumbler (p14) found by George D. Collins, Jr., and octagon 2 (p5) found independently by Sol Goodman and Arthur C. Taber \cite{gardner:1970, gardner:1971, lifeline:one}.

\begin{center}
\patstack{(p4) Pinwheel}{\href{https://conwaylife.com/?rle=6b2o$6b2o2$4b4o$2obo2bobo$2obobo2bo$3bo3b2ob2o$3bo4bob2o$4b4o2$4b2o$4b2o!&name=(p4) pinwheel}{\includegraphics[width=\textwidth]{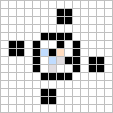}}}
\hfill
\patstack{(p6) \$rats}{\href{https://conwaylife.com/?rle=5b2o$6bo$4bo$2obob4o$2obo5bobo$3bo2b3ob2o$3bo4bo$4b3obo$7bo$6bo$6b2o!&name=(p6) \%24rats}{\includegraphics[width=\textwidth]{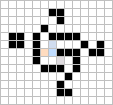}}}
\hfill
\patstack{(p7) Burloaferimeter}{\href{https://conwaylife.com/?rle=4b2o$5bo$4bo$3bob3o$3bobo2bo$2obo3bobo$2obobo2bo$4b4o2$4b2o$4b2o!&name=(p7) burloaferimeter}{\includegraphics[width=\textwidth]{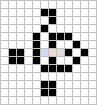}}}
\hfill
\patstack{(p10) 42P10.3}{\href{https://conwaylife.com/?rle=6b2o$6b2o2$4b4o$3bo4bo$o2b5obo$3o6bo$3bo3bobob2o$2bo2bo4bobo$2bob2ob3o$3bo2bo$4bo2bo$5b2o!&name=(p10) 42P10.3}{\includegraphics[width=\textwidth]{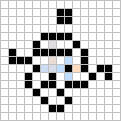}}}
\hfill
\patstack{(p11) 38P11.1}{\href{https://conwaylife.com/?rle=2b2ob2o$3bobobo$3bo4bo$2obo5bo$2obo6bo$3bobo5bo$3bob2o3b2o$4bo$5b7o$11bo$7b2o$7b2o!&name=(p11) 38P11.1}{\includegraphics[width=\textwidth]{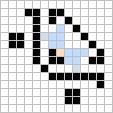}}}
\end{center}

In early 1970, while working as part of Conway's group, Simon Norton discovered a way to construct a class of oscillators that came to be known as \emph{billiard tables}. A rectangular region in the Life plane can be surrounded by a boundary of live cells and stabilized from the outside by still lifes. There is a chance that a configuration within the bounded region will then oscillate with aid from the boundary walls, interacting with but not destroying them. Norton used this technique to find pinwheel, the first period-4 oscillator. Billiard tables need not be restricted to rectangular regions and solid cell borders: throughout the 1970s Buckingham found billiard tables of varying structures including \$rats (p6), burloaferimeter (p7), and unnamed oscillators of periods 10 and 11, each the first known oscillator of their period \cite{hickerson:stamp-collection}.

\begin{center}
\patstack[0.3]{(p30) queen bee shuttle}{\href{https://conwaylife.com/?rle=9b2o$9bobo$4b2o6bo7b2o$2obo2bo2bo2bo7b2o$2o2b2o6bo$9bobo$9b2o!&name=(p30) queen bee shuttle}{\includegraphics[width=\textwidth]{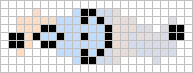}}}
\end{center}

Many larger periods are covered by another class of oscillators called \emph{hasslers}, which make up a majority of the new-period discoveries in the 21st century. These consist of an active pattern surrounded by objects that perturb its evolution and cause or allow it to reoccur. The surrounding objects are called \emph{catalysts} and are typically still lifes or other, smaller oscillators. Most hasslers involve active areas that evolve from commonly occurring patterns, as those are the most likely to reoccur naturally after being perturbed.

The first hassler used a pattern found by Robert W. April known as the \emph{queen bee}. After 15 generations, it reappears facing the opposite direction while leaving behind an extraneous still life known as a \emph{beehive}, visible on the left-hand side above. Advancing another 15 generations brings the queen bee back to its original position while depositing a second beehive, but subsequent evolution is disrupted when the advancing queen bee collides with the first beehive. Two queen bees can be positioned such that these extraneous beehives mutually annihilate, and Bill Gosper used this fact to form the first period-30 oscillator: a square of eight interacting queen bees~\cite{gosper:paleoballistics}.

The original form of this eight-fold queen bee oscillator was not recorded. Shortly after its construction, Richard P. Howell noticed that a properly placed block annihilates a beehive without itself being destroyed. This gives the much smaller queen bee shuttle (p30) shown above, with the blocks acting as catalysts.

\begin{center}
\patstack[0.3]{Gosper glider gun}{\href{https://conwaylife.com/?rle=27bo$26bobo$9b2o15b2obo$9bobo14b2ob2o3b2o$4b2o6bo13b2obo4b2o$2obo2bo2bo2bo13bobo$2o2b2o6bo8bo5bo$9bobo7bobo$9b2o9b2o5$28bo$29bo$27b3o!&name=Gosper glider gun}{\includegraphics[width=\textwidth]{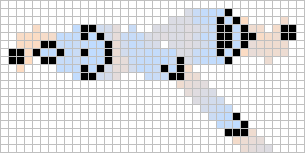}}}
\end{center}

The queen bee comprises one half of the famous \emph{Gosper glider gun}, the first pattern found that exhibits infinite growth~\cite{gardner:1971}. The two queen bees in this glider gun interact every 30 generations such that a glider is released from the mechanism.

\begin{center}
\patstack[0.3]{(p46) twin bees shuttle}{\href{https://conwaylife.com/?rle=17bo$2o15b2o8b2o$2o16b2o7b2o$13b2o2b2o4$13b2o2b2o$2o16b2o7b2o$2o15b2o8b2o$17bo!&name=(p46) twin bees shuttle}{\includegraphics[width=\textwidth]{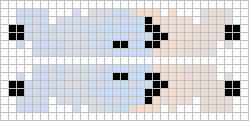}}}
\end{center}

In a similar fashion, a pair of ``B-heptominoes'' positioned with a certain spacing flips and reoccurs after 23 generations, leaving some junk behind. This junk can also be removed by placing blocks, forming the \emph{twin bees shuttle} (p46), also found by Bill Gosper (so named for its twin B's).

\begin{center}
\patstack[0.2]{(p9) worker bee}{\href{https://conwaylife.com/?rle=2o12b2o$bo12bo$bobo8bobo$2b2o8b2o2$5b6o2$2b2o8b2o$bobo8bobo$bo12bo$2o12b2o!&name=(p9) worker bee}{\includegraphics[width=\textwidth]{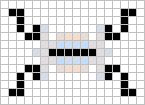}}}
\hfill
\patstack[0.15]{(p12) dinner table}{\href{https://conwaylife.com/?rle=bo$b3o7b2o$4bo6bo$3b2o4bobo$9b2o2$5b3o$5b3o$2b2o$bobo4b2o$bo6bo$2o7b3o$11bo!&name=(p12) dinner table}{\includegraphics[width=\textwidth]{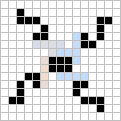}}}
\hfill
\patstack[0.15]{(p52) 35P52}{\href{https://conwaylife.com/?rle=2bo$2b3o$5bo9b2o$4b2o9bo$13bobo$13b2o3$6b3o$6bo2b2o$6b2o2bo$2b2o4b3o$bobo$bo9b2o$2o9bo$12b3o$14bo!&name=(p52) 35P52}{\includegraphics[width=\textwidth]{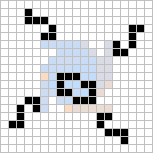}}}
\end{center}

While the queen bee and twin bees miraculously reappear without any external intervention, most hasslers do not work this way. An example is the worker bee (p9), discovered by Buckingham in 1972~\cite{lifeline:five}, where a line of six live cells evolves briefly before having its corners perturbed by four copies of a 7-cell catalyst known as an \emph{eater}. The catalysis involves the corner of the eater being briefly disturbed while the rest of the still life remains unchanged. This robust ``eating'' capability was first noted by researchers at MIT in 1971~\cite{lifeline:three} and quickly became a valuable tool in the construction of new hasslers, two more of which are displayed above~\cite{lifeline:six, hickerson:stamp-collection}.

\begin{center}
~
\hfill
\patstack[0.25]{Buckingham's p13}{\href{https://conwaylife.com/?rle=4bo15bo$3bobo13bobo$3bobo13bobo$b3ob2o11b2ob3o$o23bo$b3ob2o11b2ob3o$3bob2o11b2obo$10bo3bo$11bobo$10b2ob2o$8bo2bobo2bo$7bobobobobobo$7b2o2bobo2b2o$11bobo$12bo!&name=Buckingham's p13}{\includegraphics[width=\textwidth]{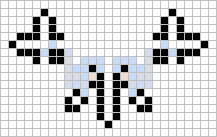}}}
\hfill
\patstack[0.2]{(p32) gourmet}{\href{https://conwaylife.com/?rle=10b2o$10bo$4b2ob2obo4b2o$2bo2bobobo5bo$2b2o4bo8bo$16b2o2$16b2o$o9b3o2bobo$3o7bobo3bo$3bo6bobo4b3o$2bobo14bo$2b2o2$2b2o$2bo8bo4b2o$4bo5bobobo2bo$3b2o4bob2ob2o$9bo$8b2o!&name=(p32) gourmet}{\includegraphics[width=\textwidth]{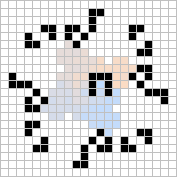}}}
\hfill
~
\end{center}

Many other stable patterns can act as catalysts. Buckingham in particular possessed an uncanny ability to construct complex catalysts by hand. Of note is the \emph{eater 2}~\cite{byte:facts-of-life} which catalyzes reactions in a similar way to the eater, but takes longer to recover. This extended recovery time allows it to catalyze some reactions that the eater cannot, such as in Buckingham's p13 oscillator, the first of its period known. Another early hassler using an unconventional catalyst is the period-32 gourmet found by Buckingham in 1978~\cite{hickerson:stamp-collection}, which rotates a pi-heptomino by 90 degrees every 8 generations.

\begin{center}
~
\hfill
\patstack[0.2]{(p16) Two pre-L hassler}{\href{https://conwaylife.com/?rle=4b2o12b2o$2obo2bob2o4b2obo2bob2o$2o2bo4bo4bo4bo2b2o$5bo12bo$6bobo6bobo$13bo$12bobo$9bo5bo$8bo5bo$9bobo$10bo$6bobo6bobo$5bo12bo$2o2bo4bo4bo4bo2b2o$2obo2bob2o4b2obo2bob2o$4b2o12b2o!&name=(p16) Two pre-L hassler}{\includegraphics[width=\textwidth]{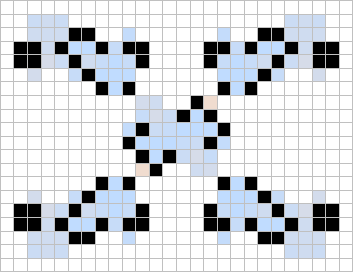}}}
\hfill
\patstack[0.2]{(p18) 117P18}{\href{https://conwaylife.com/?rle=37b2o$8b2o18b2o7bo$9bo7b2o7bo4bo3bobo$9bobo3bo4bo4bo6bo2b2o$10b2o2bo6bo2bo8bo$13bo8bobo8bo$13bo8bobo8bo$13bo8bo2bo6bo2b2o$10b2o2bo6bo4bo4bo3bobo$9bobo3bo4bo7b2o7bo$9bo7b2o18b2o$8b2o$19bo3b2o$17b2ob2ob2o$17bo2bo$18b2o$14b2o$14b2o$29b2o$2o18b2o7bo$bo7b2o7bo4bo3bobo$bobo3bo4bo4bo6bo2b2o$2b2o2bo6bo2bo8bo$5bo8bobo8bo$5bo8bobo8bo$5bo8bo2bo6bo2b2o$2b2o2bo6bo4bo4bo3bobo$bobo3bo4bo7b2o7bo$bo7b2o18b2o$2o!&name=(p18) 117P18}{\includegraphics[width=\textwidth]{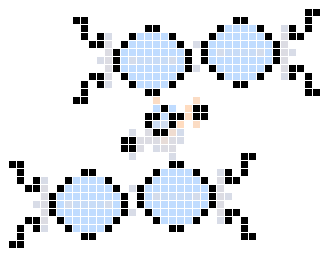}}}
\hfill
\patstack[0.2]{(p36) toad hassler}{\href{https://conwaylife.com/?rle=2o16b2o$bo7b2o7bo$bobo3bo4bo3bobo$2b2o2bo6bo2b2o$5bo8bo$5bo8bo$5bo8bo$2b2o2bo6bo2b2o$bobo3bo4bo3bobo$bo7b2o7bo$2o16b2o$11bo$9bo2bo$9bo2bo$10bo$2o16b2o$bo7b2o7bo$bobo3bo4bo3bobo$2b2o2bo6bo2b2o$5bo8bo$5bo8bo$5bo8bo$2b2o2bo6bo2b2o$bobo3bo4bo3bobo$bo7b2o7bo$2o16b2o!&name=(p36) toad hassler}{\includegraphics[angle=90,width=\textwidth]{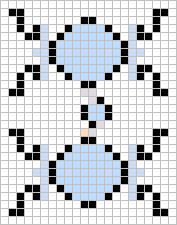}}}
\hfill
~
\end{center}

Catalysts need not be still lifes. Many hasslers are catalyzed by low-period oscillators that emit \emph{sparks} --- small groups of active cells that are ejected from the main body of an oscillator before dying off. Such \emph{sparkers} are powerful as catalysts because the sparks are free to interact with the active pattern in many different ways. Because they would die anyway, they do not need to be restored for the sparker to continue to function. 
% Sparkers are classified by their period and the kind of spark that they produce: a dot spark is a single isolated live cell, and a domino spark is two orthogonally connected cells.
When a sparker is used as a catalyst, the period of the sparker must divide the period of the entire oscillator so that the spark appears at the right time during the next period.

Very few sparkers were known prior to 1989, so hasslers exploiting them were less common. Above we have displayed three such oscillators, on the generation just as the spark interacts with the active pattern. Each was the first of its period known.

% Two early examples were discovered by Robert T. Wainwright: a period-16 hassler that uses dot sparks from four period-8 \emph{blockers} to move two active objects up and down; and the p36 toad hassler, which uses domino sparks from two copies of Mark Niemiec's period-9 \emph{snacker} to flip a period-2 toad every 18 generations. The snacker can be seen as a pentadecathlon whose period has been hassled from 15 down to 9. A few years later, Buckingham used a block catalyst and a domino spark from a snacker to shift and rotate an R-pentomino 180 degrees after 9 generations, and then back to its original position for a total of 18. 

\begin{center}
\patstack[0.25]{(p26) pre-pulsar shuttle}{\href{https://conwaylife.com/?rle=16b2o3b2o$15bo2bobo2bo$11b2o3b2o3b2o3b2o$11bo15bo$8b2obo15bob2o$7bobob2o13b2obobo$7bobo5b3o3b3o5bobo$5b2o2bo5bobo3bobo5bo2b2o$4bo4b2o4b3o3b3o4b2o4bo$4b5o21b5o$8bo21bo$2b4o27b4o$2bo2bo27bo2bo2$16b2o3b2o$bo4b3o8bo3bo8b3o4bo$obo3bobo5bo9bo5bobo3bobo$obo3b3o5b2o7b2o5b3o3bobo$bo35bo2$bo35bo$obo3b3o5b2o7b2o5b3o3bobo$obo3bobo5bo9bo5bobo3bobo$bo4b3o8bo3bo8b3o4bo$16b2o3b2o2$2bo2bo27bo2bo$2b4o27b4o$8bo21bo$4b5o21b5o$4bo4b2o4b3o3b3o4b2o4bo$5b2o2bo5bobo3bobo5bo2b2o$7bobo5b3o3b3o5bobo$7bobob2o13b2obobo$8b2obo15bob2o$11bo15bo$11b2o3b2o3b2o3b2o$15bo2bobo2bo$16b2o3b2o!&name=p26 pre-pulsar shuttle}{\includegraphics[width=\textwidth]{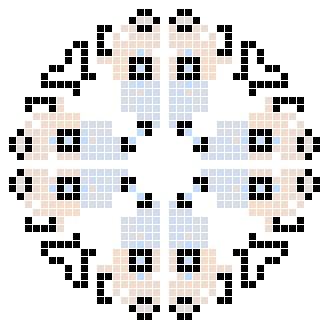}}}
\hfill
\patstack[0.25]{(p28) newshuttle}{\href{https://conwaylife.com/?rle=26b2o$20bo3bo2bo2bo$18b3o3b3o3b3o$8b2o7bo15bo7b2o$9bo7b2o5b3o5b2o7bo$9bobo11bo3bo11bobo$10b2o11b2ob2o11b2o2$3bo43bo$3b3o39b3o$6bo7b3o17b3o7bo$5b2o7bobo17bobo7b2o$14b3o3b3o5b3o3b3o$20bobo5bobo$10b3o7b3o5b3o7b3o$10bobo25bobo$10b3o25b3o$3b2o18b2ob2o18b2o$2bobo18bo3bo18bobo$2bo21b3o21bo$b2o9b3o11bobo7b3o9b2o$12bobo12b2o7bobo$12b3o5b2o14b3o$2o3b2o10b2o2bo10b2o10b2o$obobobo10bob2o10bobo10bobob2o$2bobo14bo11bo14bobo$b2obobo10bobo10b2obo10bobobobo$5b2o10b2o10bo2b2o10b2o3b2o$12b3o14b2o5b3o$12bobo7b2o12bobo$b2o9b3o7bobo11b3o9b2o$2bo21b3o21bo$2bobo18bo3bo18bobo$3b2o18b2ob2o18b2o$10b3o25b3o$10bobo25bobo$10b3o7b3o5b3o7b3o$20bobo5bobo$14b3o3b3o5b3o3b3o$5b2o7bobo17bobo7b2o$6bo7b3o17b3o7bo$3b3o39b3o$3bo43bo2$10b2o11b2ob2o11b2o$9bobo11bo3bo11bobo$9bo7b2o5b3o5b2o7bo$8b2o7bo15bo7b2o$18b3o3b3o3b3o$20bo2bo2bo3bo$23b2o!&name=(p28) newshuttle}{\includegraphics[width=\textwidth]{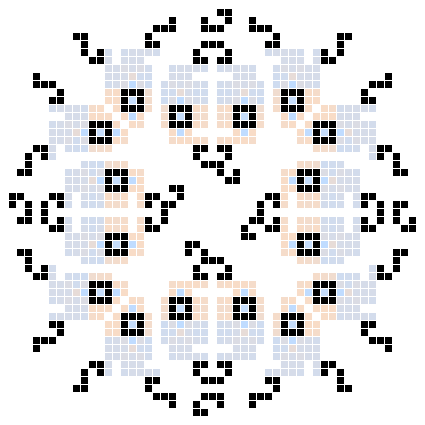}}}
\hfill
\patstack[0.25]{(p29) pre-pulsar shuttle}{\href{https://conwaylife.com/?rle=15bo$13b3o$12bo$12b2o2$bo$obo6b3o$bo7bobo3b2o$9b3o3b2o3$19b2o$9b3o7b2o5b2o$bo7bobo14bo$obo6b3o12bobo$bo22b2o$13b3o3b3o$13bobo3bobo$13b3o3b3o7$13bo7bo$12bobo5bobo$13bo7bo!&name=p29 pre-pulsar shuttle}{\includegraphics[width=\textwidth]{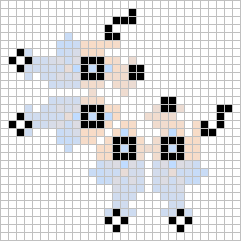}}}
\end{center}

A notable class of early hasslers pioneered by Buckingham uses a common predecessor of the pulsar. The \emph{pre-pulsar} is an arrangement of two hollow $3\times 3$ squares that interact in such a way that they duplicate themselves after 15 generations. Various catalyses can be used to hasten or delay this reaction, and combined they can be used to create oscillators of many periods. The three depicted above, all by Buckingham, were the first known oscillators of their respective periods.

\begin{center}
\patstack[0.3]{(p23) David Hilbert}{\href{https://conwaylife.com/?rle=7b2o15b2o$8bo15bo$6bo19bo$6b5o11b5o$10bo11bo$4b4o16b5o$4bo2bo17bo2bo$7bo2bo14b2o$7bo3bo12b3o$7bo3bo2b2ob2o5b2o$8bo2b2ob2ob2o5b2o$10bo12bo$8b2o$7b3o13bo$3b2o2b3o6b2o4bobo3b2o$3bo11bo2bo3bobo4bo$2obo12b2o4b3o4bob2o$ob2ob2o19b2ob2obo$5bo21bo$5bobo17bobo$6b2o17b2o$10bo11bo$6b5o11b5o$6bo19bo$8bo15bo$7b2o15b2o!&name=(p23) David Hilbert}{\includegraphics[width=\textwidth]{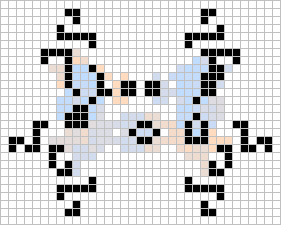}}}
\end{center}

The use of computer searches did not foreclose the ability of humans to find new oscillators by hand through experimentation and intuition. A final hand-crafted hassler worthy of mention is the first known period-23 oscillator: this was the 5th last period to be filled. An attempt from 2015 by forum user ``praosylen'' was modified by Luka Okanishi in 2019 to be a near-oscillator; the only defect being that a beehive is pushed away by a one-cell distance~\cite{p23-completion}. To push the beehive back, a second copy of the near-oscillator is used, running out-of-phase with the first.

\subsection{Soup Searches}\label{sec:soup-searches}

\begin{center}
~
\hfill
\patstack[0.2]{(p3) jam}{\href{https://conwaylife.com/?rle=4b2o$3bo2bo$bo2bobo$o4bo$o$2b3o!&name=(p3) jam}{\includegraphics[width=\textwidth]{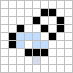}}}
\hfill
\patstack[0.2]{(p4) mold}{\href{https://conwaylife.com/?rle=3b2o$2bo2bo$bobobo$bo2bo$o$bobo!&name=(p4) mold}{\includegraphics[width=\textwidth]{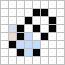}}}
\hfill
\patstack[0.2]{Achim's p11}{\href{https://conwaylife.com/?rle=10b2o$10b2o$5bo10bo$4bobo8bobo$3bobo10bobo$2bobo3b6o3bobo$3bo14bo2$5bo10bo$5bo10bo$2o3bo10bo3b2o$2o3bo10bo3b2o$5bo10bo$5bo10bo2$3bo14bo$2bobo3b6o3bobo$3bobo10bobo$4bobo8bobo$5bo10bo$10b2o$10b2!&name=Achim's p11}{\includegraphics[width=\textwidth]{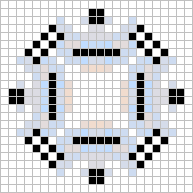}}}
\hfill
~
\end{center}

As we have seen, one method for finding oscillators is to start with a random configuration and get lucky. Done on a large scale, these are known as \emph{soup searches}, essentially automated versions of the random scribbling that newcomers to Life perform. A small random starting configuration (i.e., a \emph{soup}) is chosen, optionally with symmetry, and then run until it stabilises. The objects that remain in the ash are programmatically separated from one another and tabulated, resulting in a \emph{census}. With carefully optimised software, tens of thousands of soups can be tested per second on a standard computer.

From 1987 to 1988, Achim Flammenkamp wrote the first soup-searching programs, running soups on a torus --- a finite universe that wraps around on its edges. The period-3 \emph{jam} and the period-4 \emph{mold} were discovered this way, spotted in the ash of soups run on 32\texttimes{}32 toruses. A soup search on symmetrical configurations yielded the sparking p11 oscillator on the right, which makes an appearance in Section~\ref{sec:period-multipliers}.

\begin{center}
~
\hfill
\patstack[0.2]{Rob's p16}{\href{https://conwaylife.com/?rle=4b2o2$4b3o$2bob2o$b2obo$b2ob2o$bo6bo$2o3bobobo$bo3bo2bo!&name=Rob's p16}{\includegraphics[width=\textwidth]{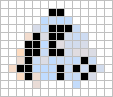}}}
\hfill
\patstack[0.2]{Rich's p16}{\href{https://conwaylife.com/?rle=3b3o3b3o2$b2o3bobo3b2o$bo4bobo4bo$o5bobo5bo$bo4bobo4bo$2b3o5b3o2$5b2ob2o$4bobobobo$5bo3bo!&name=Rich's p16}{\includegraphics[width=\textwidth]{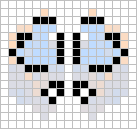}}}
\hfill
\patstack[0.2]{(p25) 30P25}{\href{https://conwaylife.com/?rle=o$3o$3bo$2b2o2$6bo$6b2o$7b2o$4b2o$4b3o$13b3o$14b2o$11b2o$12b2o$13bo2$16b2o$16bo$17b3o$19bo!&name=(p25) 30P25}{\includegraphics[width=\textwidth]{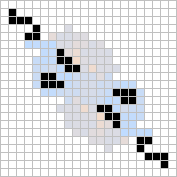}}}
\hfill
~
\end{center}

Several large censuses have been run since then \cite{flammenkamp:census, okrasinski:census, tollcass:census}, but by far the largest is the ongoing Catagolue~\cite{catagolue} project run by Adam P. Goucher, having searched more than 450 trillion soups and tallied more than 12 quadrillion Life objects since 2015 (at the time of writing). In 2022, the Catagolue search was connected to the Charity Engine and OpenScienceGrid~\cite{osg07} distributed computing platforms, significantly boosting the search speed. % These platforms have since contributed a considerable fraction of the results discovered through soup search.

Oscillators that are found via soup searches are often spectacularly volatile, with few cells that remain stable through the entire period. This is in contrast to the billiard tables and hasslers we have seen, where the majority of the alive cells are stable, only present to support a small active region. Soup searches are essentially the only method we have of finding small, volatile, high-period oscillators.

Above is displayed Rob's p16 (found by Rob Liston), Rich's p16 (found by Rich Holmes) and 30P25 (found by Charity Engine), all through the Catagolue project.

\begin{center}
\patstack[0.2]{Merzenich's p31}{\href{https://conwaylife.com/?rle=7b2obo2bob2o$2o4bo2bo4bo2bo4b2o$2o5bobo4bobo5b2o$8bo6bo6$8bo6bo$2o5bobo4bobo5b2o$2o4bo2bo4bo2bo4b2o$7b2obo2bob2o!&name=Merzenich's p31}{\includegraphics[width=\textwidth]{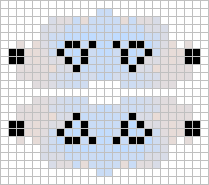}}}
\hfill
\patstack[0.2]{Beluchenko's p37}{\href{https://conwaylife.com/?rle=11b2o11b2o$11b2o11b2o3$6bo23bo$5bobo6b3o3b3o6bobo$4bo2bo5bo9bo5bo2bo$5b2o6bo2bo3bo2bo6b2o$14b2o5b2o3$2o33b2o$2o33b2o$6b2o21b2o$5bo2bo19bo2bo$5bo2bo19bo2bo$5bobo21bobo4$5bobo21bobo$5bo2bo19bo2bo$5bo2bo19bo2bo$6b2o21b2o$2o33b2o$2o33b2o3$14b2o5b2o$5b2o6bo2bo3bo2bo6b2o$4bo2bo5bo9bo5bo2bo$5bobo6b3o3b3o6bobo$6bo23bo3$11b2o11b2o$11b2o11b2o!&name=Beluchenko's p37}{\includegraphics[width=\textwidth]{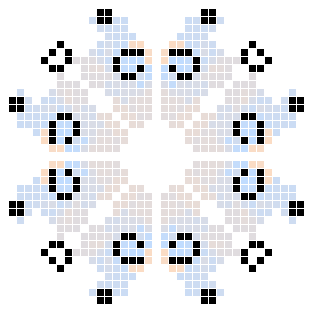}}}
\hfill
\patstack[0.2]{Beluchenko's p51}{\href{https://conwaylife.com/?rle=3b2o21b2o$3b2o21b2o2$2o27b2o$2o5bo2b3o5b3o2bo5b2o$6b3ob3o5b3ob3o$5bo2bo13bo2bo$4b2o19b2o$5b2o17b2o2$4b2o19b2o$4b2o19b2o$4b2o19b2o6$4b2o19b2o$4b2o19b2o$4b2o19b2o2$5b2o17b2o$4b2o19b2o$5bo2bo13bo2bo$6b3ob3o5b3ob3o$2o5bo2b3o5b3o2bo5b2o$2o27b2o2$3b2o21b2o$3b2o21b2o!&name=Beluchenko's p51}{\includegraphics[width=\textwidth]{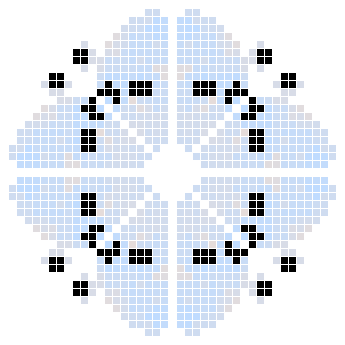}}}
\end{center}

\texttt{torus}~\cite{summers:torus} by Jason Summers and \texttt{RandomAgar}~\cite{randomagar} by Gabriel Nivasch are soup search programs that attempt to find \emph{agars}: patterns that are periodic in space as well as periodic in time. They do so by choosing a symmetry (one of the wallpaper groups compatible with a square grid\footnote{Of the 17 wallpaper groups, 5 contain a rotation of 120$^{\circ}$ and so cannot be used. Of the 12 remaining, 9 yield distinct symmetries when aligned to the grid orthogonally and diagonally, so there are 21 possibilities in all.}) and size of a fundamental region, filling the fundamental region with a random initial configuration, and simulating the finite universe until it becomes periodic. In some cases, a complete oscillator will occur. The first oscillators with periods 31, 37 and 51, depicted above, were found in this way using a variant of \texttt{RandomAgar} written by Nicolay Beluchenko~\cite{lifenews:p31, lifenews:p37, lifenews:p51}.

\begin{center}
\hfill
\patstack[0.2]{(p36) 22P36}{\href{https://conwaylife.com/?rle=17bo$15b3o$4b3o7bo$14b2o$2bo5bo$2bo5bo2b3o$2bo5bo$9bo5bo$4b3o2bo5bo$9bo5bo$2b2o$3bo7b3o$3o$o!&name=(p36) 22P36}{\includegraphics[width=\textwidth]{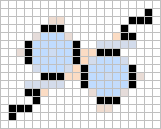}}}
\hfill
\patstack[0.4]{Raucci's p38}{\href{https://conwaylife.com/?rle=4bo32bo$4bo32bo$4bo32bo$6b2o3bo18bo3b2o$3o3b2o3b2o16b2o3b2o3b3o$13bo14bo$12b2o14b2o3$6b2o26b2o$6bo28bo$7b3o22b3o$9bo22bo!&name=Raucci's p38}{\includegraphics[width=\textwidth]{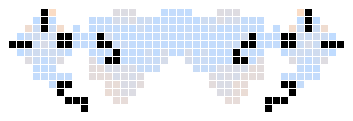}}}
\hfill
~
\end{center}

In other cases the agars do not contain complete oscillators, but external ``fenceposts'' can be found to replace the influence of the wraparound at the edges of the fundamental region, resulting in a standalone oscillator.  A notable oscillator that was found this way is Raucci's p38: this period was the fourth last to be filled. The active part of the oscillator was found by Jason Summers in 2000, as an infinite agar. Only 22 years later, some appropriate fenceposts were found: a catalyst known as the \emph{traffic stop}. This catalyst was discovered by a user known pseudonymously as ``cvojan'', and can be seen as one half of a period-36 oscillator found by Noam D. Elkies in 1995. It was then applied by David Raucci to Summers' period-38 agar---an excellent example of how much work and collaboration goes into a single result!

\subsection{Direct Searches and Drifters}

The simplest strategy for searching directly for oscillators is a brute force search on all states of a fixed size. This quickly becomes unfeasible and so more refined strategies are needed.

The earliest software capable of searching for oscillators directly was written by Dean Hickerson in 1989. The program performs a depth-first search on the state of each cell in a finite grid in every phase of the desired period, where the rules of Life are enforced forwards and backwards in time. This puts strong constraints on which choices for each cell are allowed: as soon as any cell is known to not oscillate at the correct period, that branch of the search tree can be dropped. This approach is generally effective only for finding low-period oscillators, and early results were further limited by the speed of Hickerson's Apple II.
%, for which the program had been exclusively written in 6502 assembly language and Applesoft BASIC. 
%Despite these limitations, Hickerson found many new sparking oscillators that would prove useful in the construction of higher-period hasslers. \mvrnote{which sparkers were found this way? Fumarole, what else? Caterer? }
Hickerson described the method in detail, and several modern search tools are descendants of that original program~\cite{winlifesearch, javalifesearch}, most notably \texttt{lifesrc}~\cite{lifesrc} by David Bell.

\begin{center}
\hfill
\patstack[0.25]{(p25) 134P25}{\href{https://conwaylife.com/?rle=11b2o$7b2o2bo$6bo2bobo$2b2obob3ob2o$2bob2obobobo$13bo$3b4obob3obo$2bo3b2ob2o3bo$3b3ob3ob3o$5bo5bo7bo$17b3o$6bo3bo5bo$8bo7b2o$2o$bo$bobo9b3o$2b2o9bobo$7b3o3b3o$7bobo9b2o$7b3o9bobo$21bo$21b2o$5b2o7bo$6bo5bo3bo$3b3o$3bo7bo5bo$9b3ob3ob3o$8bo3b2ob2o3bo$8bob3obob4o$9bo$11bobobob2obo$10b2ob3obob2o$11bobo2bo$11bo2b2o$10b2o!&name=(p25) 134P25}{\includegraphics[angle=90,width=\textwidth]{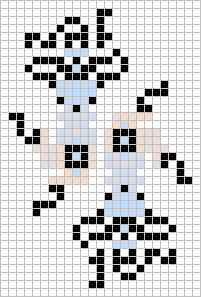}}}
\hfill
\patstack[0.2]{(p40) B-heptomino shuttle}{\href{https://conwaylife.com/?rle=b2o20b2o$b2o20bobo$24b3o$25b2o$3o19b2o$2o20b3o$3b2o$2b3o$bobo3b2o2b2o10b2o$b2o3bo8b2o6b2o$7b2o7b2o$15b2o$11bo3bo$11bo$7bo4bo5bo$6bobo8bobo$7bo10bo$11bo2bo$9bo2b2o2bo$9bo2b2o2bo$11b4o$9bobo2bobo$9b2o4b2o!&name=(p40) B-heptomino shuttle}{\includegraphics[width=\textwidth]{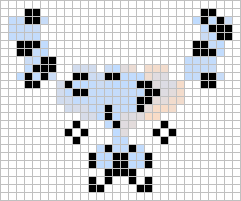}}}
\hfill
~
\end{center}

After the late 1980s, these cell-by-cell search programs allowed more powerful sparkers to be found. The first known oscillators of periods 25 and 40 use some of these newer sparkers.

\begin{center}
\patstack{(p17) 54P17.1}{\href{https://conwaylife.com/?rle=5bo$4bobo$4bobo3b2o$b2obob2o3bo$2bobo6bob2o$o2bob2ob2obo2bo$2obobobo2bobo$3bob2ob2o2b2o$3bobo3bobo$4b2obobobo$6bobobo$6bobo$7b2o!&name=(p17) 54P17.1}{\includegraphics[width=\textwidth]{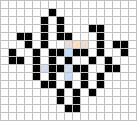}}}
\end{center}

The search program \texttt{dr}~\cite{dr} by Hickerson is similar to these cell-by-cell search programs, but makes a strong assumption that turns out to be extremely effective in practice: that the active part of the pattern is a small, unstable region present on a completely stable background. This could be seen as a systematic method of hunting for new billiard tables.

The program was originally written to search for signals traveling through stable \emph{wire}, after the manual discovery of a signal wire with speed $2c/3$\footnote{In this context, $c$ is the speed of one cell per generation, the maximum speed attainable by any signal in Life.}. The hope was that an \emph{elbow} could be found, turning a signal 90 degrees. If the repeat time of this elbow were short enough (i.e. $\leq 19$), this would immediately resolve omniperiodicity in the affirmative. These searches were not successful, though there were some near misses~\cite{lexicon:elbow}, including an alternative signal wire with transmission speed $5c/9$.

The \texttt{dr} program can also identify when it has stumbled on an oscillator. Typically, these have low periods, but the result can occasionally get into the high teens and low twenties~\cite{hickerson:new-billard-tables}. The first period-17 oscillator was found this way by Hickerson. This general strategy of using a backtracking search to find still lifes with good properties can also be used to produce powerful catalysts for use in hasslers, as we will see soon. 

\subsection{Period Multipliers}\label{sec:period-multipliers}

\begin{center}
\patstack{A trivial p12}{\href{https://conwaylife.com/?rle=8bobo$11bo$7bo2bo$6bobobo$6bo2bo$4b2ob2o$3bo2bo$bo2bobo$o4bo$o$2b3o!&name=A trivial p12}{\includegraphics[width=\textwidth]{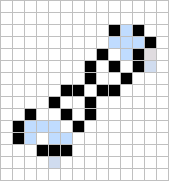}}}
\end{center}

From oscillators of period $n$ and $m$, we can easily construct a pattern with period $\lcm(n, m)$ by placing the oscillators in the plane such that they do not interact. These are \emph{trivial} period-$\lcm(n, m)$ oscillators, as no cell of the pattern oscillates at the full period. Above, we show a trivial p12 oscillator constructed by welding the p3 jam and p4 mold back-to-back.

\begin{center}
\patstack{(p20)}{\href{https://conwaylife.com/?rle=3bo2bo$3bo2bo$b2ob2ob2o$3bo2bo$3bo2bo$b2ob2ob2o$3bo2bo$3bo2bo2$7b2o$2b2obo4bob2o$2bo10bo$3b2o6b2o$3o2b6o2b3o$o2bo8bo2bo$b2o10b2o!&name=(p20)}{\includegraphics[width=\textwidth]{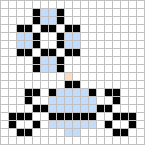}}}
\hfill
\patstack{(p21)}{\href{https://conwaylife.com/?rle=10b2o$4bo4bo2bo$3bobo3bobo$3bobo4bo3b2o$2obob2o4b2obo$ob2o4bo3bo$4b3obo3bo$4bo$5b3o2b2o$8bo$5b2obobobo$5b2obobob3o$9b2o4bo$11b4o$11bo$12bo$11b2o!&name=(p21)}{\includegraphics[width=\textwidth]{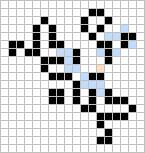}}}
\hfill
\patstack{(p24)}{\href{https://conwaylife.com/?rle=4bo5bo$4bo5bo7b2o$4b2o3b2o$16bo3bo$3o2b2ob2o2b3obo4bo$2bobobobobobo5bobobo$4b2o3b2o8bobobo$20bo4bo$4b2o3b2o10bo3bo$2bobobobobobo$3o2b2ob2o2b3o7b2o2$4b2o3b2o$4bo5bo$4bo5bo!&name=(p24)}{\includegraphics[width=\textwidth]{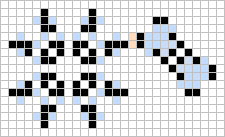}}}
\hfill
\patstack{(p35)}{\href{https://conwaylife.com/?rle=16bo6b2o$15bobob2o2bo$4bo10bobobobobo$3bobo8b2obo3bob2o$3bobo6bo3b2obob2o3bo$2obob2o8bo7bo$ob2o4bo6bo7bo$4b3obo3bo3b2obob2o3bo$4bo9b2obo3bob2o$5b3o2b2o3bobobobobo$8bo6bobob2o2bo$5b2obobobo3bo6b2o$5b2obobob4o$9b2o$11b3o$11bo2bo$13b2o!&name=(p35)}{\includegraphics[width=\textwidth]{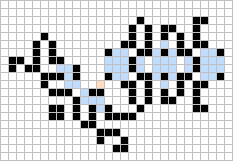}}}
\hfill
\patstack{(p42)}{\href{https://conwaylife.com/?rle=16b2o$16bo2bo2$4bo$3bobo10bob2o$3bobo5b2o2bobo$2obob2o4bo4bo$ob2o4bo6bo$4b3obo3bo2bo$4bo$5b3o2b2o$8bo$5b2obobobo$5b2obobob3o$9b2o4bo$11b4o$11bo$12bo$11b2o!&name=(p42)}{\includegraphics[width=\textwidth]{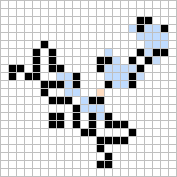}}}
\end{center}

Sometimes, however, two oscillators can be placed such that they just barely interact without destroying either oscillator; such patterns are called \emph{LCM oscillators}. One way to do this uses sparkers. For example, the first period-20 oscillator consists of a period-4 oscillator that emits domino sparks interacting with the period-5 octagon. When the domino spark and the edge of the octagon 2 are present on the same generation, they interact and turn on a single cell between the two oscillators. The first oscillators of periods $20 = 4 \times 5$, $21 = 3 \times 7$, $24 = 3 \times 8$, $33 = 3 \times 11$, $35 = 5 \times 7$ and $42 = 6 \times 7$ were all found this way.

The above images present the oscillators on the generation before they interact; the interaction point is visible in orange. These interaction cells are the only ones that oscillate at the full period.

In fact, this method of period-multiplying could have been used to construct some periods \emph{earlier} than their official discovery dates: the requisite components existed at the time but were never combined. The periods in this category are 18, 36 and 40. For example, the octagon 2 and the figure eight may be combined to form a non-trivial oscillator of period 40 using the same interaction that appears in the period-20 oscillator above.

\begin{center}
\patstack[0.4]{(p33) 330P33}{\href{https://conwaylife.com/?rle=19b2o$19b2o$14bo10bo$13bobo8bobo$12bo2bo3b2o3bo2bo$11bo6b4o6bo$12b2o4bo2bo4b2o3$14b2o8b2o$9b2o2b2o10b2o2b2o$9b2o2b2o10b2o2b2o$14b2o8b2o3$12b2o4bo2bo4b2o$11bo6b4o6bo$12bo2bo3b2o3bo2bo$13bobo8bobo$14bo10bo$19b2o$14bo4b2o3$3bo3bobob6obobo3bo$2bob2obo2bobo2bobo2bob2obo$2bo7b2o4b2o7bo$b2o2bo3bo2b4o2bo3bo2b2o$o2b6ob2o4b2ob6o2bo$2o6b2ob6ob2o6b2o$2b2obo16bob2o$2o2bo8b2o8bo2b2o$bobo2b3o3bo2bo3b3o2bobo$o2b3o5b2o2b2o5b3o2bo$b2o3bob2obo4bob2obo3b2o$2bobo2bob3o4b3obo2bobo$2bob2obo3b6o3bob2obo$b2o2bobobobo4bobobobo2b2o$o2bo7b6o7bo2bo$b2o3bo14bo3b2o$5bo3b4o2b4o3bo$4bob2obo2bo2bo2bob2obo$4bo2b3o8b3o2bo$b2obo2b3o2bo2bo2b3o2bob2o$b2obob2o4b4o4b2obob2o$5bo3bo8bo3bo$6bobo10bobo$4bobobobo6bobobobo$4b2o3b2o6b2o3b2o!&name=(p33) 258P3 on Achim's p11}{\includegraphics[angle=90,width=\textwidth]{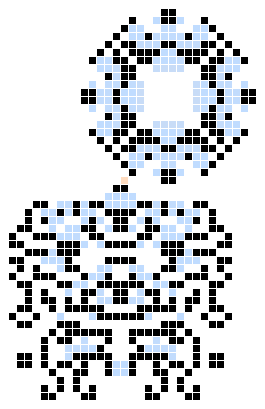}}}
\end{center}

The $33 = 11 \times 3$ oscillator involves a large sparker on the right, which was found by Elkies using \texttt{lifesrc}. It was the first known period-3 sparker to emit a domino spark so isolated from the remainder of the sparker, and has various other applications in Life.

\begin{center}
\patstack[0.3]{(p39) 134P39.1}{\href{https://conwaylife.com/?rle=7b2o3b2o$7bo3bobo$4b2obo2bo2bobo$4bo2b2o3bo2b3o$6bo3bo2bo4bo$7b3obob2ob3o$9bo5bo$10bo4bob2o$4b2o5b2ob2ob2o$2obobo3bo2bobo$2obob3ob2obobo10bo$3bo2b2o3bobo10bobo$3bobo2b2o2bo11bobo$4bobo3b2o11b2ob2o$6bobo$6bobob2o11b2ob2o$7b2ob2o11b2obo$15bo3bo8bo$16bobo8b2o$15b2ob2o$13bo2bobo2bo$12bobobobobobo$12b2o2bobo2b2o$16bobo$17bo!&name=(p39) 134P39.1}{\includegraphics[width=\textwidth]{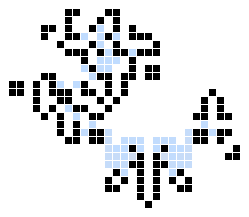}}}
\end{center}

The first period-39 oscillator also multiplies the period of two smaller oscillators, but uses a different mechanism to do so. The lower part of the oscillator is Buckingham's p13 (seen in Section \ref{sec:manual-searches}), which is normally supported by two stable eater 2s. In 2000, Elkies found a period-3 variant of the eater 2, such that when the eater 2 catalysis occurs, it toggles the state of an otherwise stable cell. The period-3 component then toggles the cell state back, in this case creating an oscillator with period $39 = 3 \times 13$.

\begin{center}
\patstack[0.2]{(p27) 123P27.1}{\href{https://conwaylife.com/?rle=b2ob2o$2bobobob2o$bo4bobo$ob4o4bo$o4bo3b4obo$b4o3bo4b2o$3bo4b4o3b2o$5bobo4b2o3bo$4b2obo2b2o2b2obo$3bo3bobo2bobobo$4b3o2b2obo6bo$7b2o3bo2b5o$6bo2bob2obobo$5bob2o3bo2bo2b2o$6bo2b2o3bobo2bo$7b2o3bobob2o$9b4obobo$9bo2bobobo$10b5ob2o$15bo2bo$9bob4o2b2o$9b2obo!&name=(p27) 123P27.1}{\includegraphics[width=\textwidth]{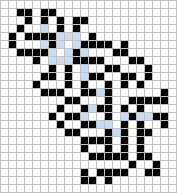}}}
\end{center}

\iffalse
\ccnote{how should we mention that if parts of the p27 and p39 are really found using dr? Shall we just create a new section for these two oscillators (maybe, "other period multipliers")?}
\fi

A third technique is seen in the first known period-27 oscillator. When two oscillating mechanisms interact, one can cause the other to skip ahead or fall behind by one or more generations, a phenomenon known as \emph{phase-shifting}. The first period-27 oscillator, found by Elkies, is a period-9 oscillator with a small period-7 section attached. When the phases of the mechanisms align in a certain way, they interact such that the period-7 component skips one generation. 

\subsection{Signal Loops}

\begin{center}
  \patstack[0.25]{(p180) glider loop}{\href{https://conwaylife.com/?rle=22b2o$22bo9b2o$12b2o6bobo9b2o$11bobo5b3o$10bo6b3o13bo$b2o7bo2bo2bo2bo12bobo$b2o7bo6b2o13bobo$11bobo19bo$12b2o2$30b2obob2o$30bo5bo$31bo3bo$32b3o2$o$3o$3bo$2b2o2$35b2o$35bo$36b3o$38bo2$4b3o$3bo3bo$2bo5bo$2b2obob2o8bo$18bo$16b3o6b2o$5bo19bobo$4bobo13b2o6bo7b2o$4bobo12bo2bo2bo2bo7b2o$5bo13b3o6bo$17b3o5bobo$5b2o9bobo6b2o$5b2o9bo$15b2o!&name=(p180) glider loop}{\includegraphics[width=\textwidth]{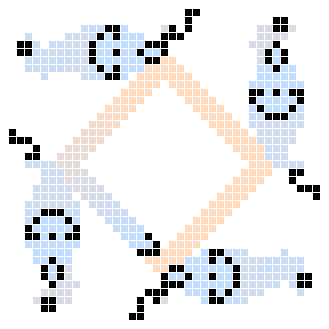}}}
  \hfill
  \patstack[0.25]{(p30) glider loop}{\href{https://conwaylife.com/?rle=22b2o$22bo9b2o$12b2o6bobo9b2o$11bobo5b3o$10bo6b3o13bo$b2o7bo2bo2bo2bo12bobo$b2o7bo6b2o13bobo$11bobo19bo$12b2o6b3o$20bo$21bo8b2obob2o$30bo5bo$13bo17bo3bo$11b2o19b3o$12b2o$o27b2o$3o25bobo$3bo24bo$2b2o2$35b2o$10bo24bo$8bobo25b3o$9b2o27bo$25b2o$4b3o19b2o$3bo3bo17bo$2bo5bo$2b2obob2o8bo$18bo$16b3o6b2o$5bo19bobo$4bobo13b2o6bo7b2o$4bobo12bo2bo2bo2bo7b2o$5bo13b3o6bo$17b3o5bobo$5b2o9bobo6b2o$5b2o9bo$15b2o!&name=(p30) glider loop}{\includegraphics[width=\textwidth]{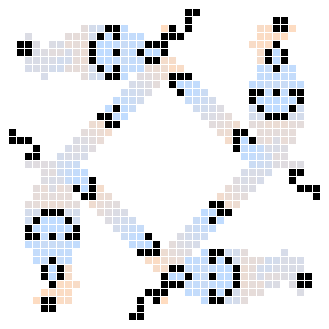}}}
  \hfill
  \patstack[0.25]{(p300) glider loop}{\href{https://conwaylife.com/?rle=37b2o$37bo9b2o$27b2o6bobo9b2o$26bobo5b3o$25bo6b3o13bo$16b2o7bo2bo2bo2bo12bobo$16b2o7bo6b2o13bobo$26bobo19bo$27b2o2$45b2obob2o$45bo5bo$46bo3bo$47b3o7$50b2o$50bo$51b3o$53bo7$o$3o$3bo$2b2o7$4b3o$3bo3bo$2bo5bo$2b2obob2o8bo$18bo$16b3o6b2o$5bo19bobo$4bobo13b2o6bo7b2o$4bobo12bo2bo2bo2bo7b2o$5bo13b3o6bo$17b3o5bobo$5b2o9bobo6b2o$5b2o9bo$15b2o!&name=(p300) glider loop}{\includegraphics[width=\textwidth]{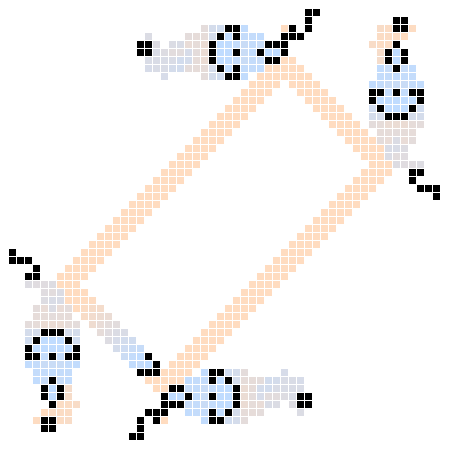}}}
\end{center}

Rather than using a perturbation in a still life as a signal in a loop, an obvious alternative is to use the glider, which propagates itself without any external intervention. All that is necessary is some method of reflecting a glider by 90 degrees. Above, we show some glider loops formed by four copies of the period-30 \emph{buckaroo}, a variant of the queen bee shuttle that produces a spark capable of reflecting a glider. On the left, we have the most basic version: a single glider is reflected four times to return to its starting position. We can produce oscillators with factors of its period by placing additional gliders in the loop (as long as the period is still divisible by 30, so that the gliders arrive at each buckaroo when it is in the correct phase). The pattern will repeat when each glider moves into the position and state of the next, rather than when one glider goes the entire way around. In the center pattern we fit 6 gliders into the same loop, producing the minimum period of 30. The positions of the reflectors are adjustable and by moving the buckaroos further apart, larger periods can be produced. Combining these two tricks, an oscillator with period $30n$ can be produced for any $n$. Glider loops can be constructed using many different glider reflectors, each giving an infinite family of oscillators.

\begin{center}
\patstack[0.5]{(p49) glider loop}{\href{https://conwaylife.com/?rle=14bo2bo2bo$14b7o2bo$21b3o$16b2obo$15bo3b3o$15bobo4bo$14b2o2bob2obo$15bobo3bobo$15bob2obo2b2o$13bobo4bobo$13b2obobobobo10bo$16bobobob2o7b3o$16bo4bo8bo$13b2ob2obobo8b2o$13bobo3bob2o$16b4o8bo$20b2o2b2obobo$16b4o8b2o2b2o$16bo2bob2o9b2o$21bo$19bobo8bo19b2o2b2o$19b2o8b2o20bo2bo3bo4b2o$29bobo18bo4b6o2bo$46b2obob4o6bobo$10b2o5b2o28bobobo4b2obobob2o$11bo5b2o28bobo3b2obobo4bo$11bobo29b2ob2ob3o8b2obo$12b2o29bo4bo4b4obobo2b2o$16b2o26b3obob3o4b2obobo$15bobo4bo23bo3bo2b3o3bo2bo$16bo6bo29bo2b3o3b2o$21b3o24bo7bo$16bo31bo$8bo7bo24b3o$b2o3b3o2bo29bo6bo$2bo2bo3b3o2bo3bo23bo4bobo$2bobob2o4b3obob3o26b2o$2o2bobob4o4bo4bo29b2o$bob2o8b3ob2ob2o29bobo$bo4bobob2o3bobo28b2o5bo$2obobob2o4bobobo28b2o5b2o$bobo6b4obob2o$bo2b6o4bo18bobo$2o4bo3bo2bo20b2o8b2o$9b2o2b2o19bo8bobo$43bo$31b2o9b2obo2bo$31b2o2b2o8b4o$35bobob2o2b2o$36bo8b4o$42b2obo3bobo$33b2o8bobob2ob2o$34bo8bo4bo$31b3o7b2obobobo$31bo10bobobobob2o$42bobo4bobo$40b2o2bob2obo$41bobo3bobo$41bob2obo2b2o$42bo4bobo$43b3o3bo$45bob2o$41b3o$41bo2b7o$44bo2bo2bo!&name=(p49) glider loop}{\includegraphics[width=\textwidth]{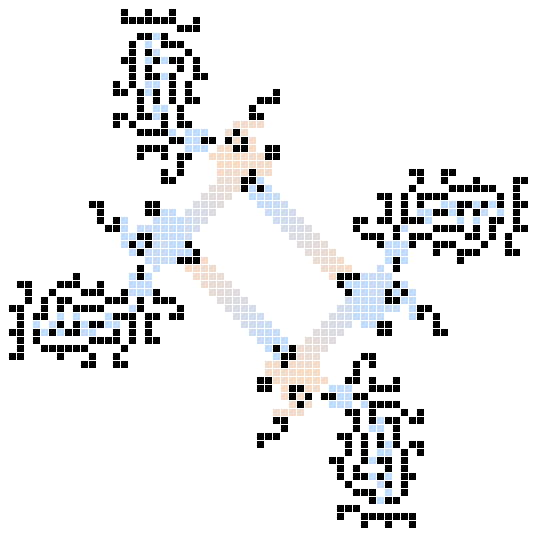}}}
\end{center}

A notable example is the \emph{bouncer}, a periodic glider reflector discovered in the late 90s by Elkies, based on the collision of a glider with a still life known as a \emph{boat}. A block, an eater, and a spark perturb the reaction such that it releases another glider and restores the boat. The spark interacts briefly and only once per glider reflection, so the reaction can work with a sparker of any sufficiently large period. This was used to form the first known oscillator of period 49, displayed above, using period-7 sparkers.

This particular loop can be modified to have any larger period that is a multiple of 7. But there is an obstruction to adjusting it much lower: if the period is too low, the reflection reaction may not have completed by the time the next glider arrives! A gap of 22 generations is sufficient for the reflections to succeed. For reflectors and other mechanisms, this is called the \emph{minimum repeat time}: the minimum delay between two inputs such that the mechanism works as expected both times. In general, for any period $p \geq 22$, a bouncer-type oscillator can be assembled as long as a suitable sparker is known. To function as a loop, the period of the sparker used must divide the overall length of the loop.

\subsubsection{Herschel Conduits}

\begin{center}
\patstack{The Herschel}{\href{https://conwaylife.com/?rle=o$3o$obo$2bo!&name=Herschel}{\includegraphics[width=\textwidth]{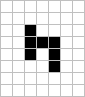}}}
\end{center}

Besides the glider, a promising choice of active region for forming signal loops is the \emph{Herschel}, an active pattern that often appears in the evolution of other more common sequences, such as the R-pentomino and B-heptomino. Crucially, the Herschel typically appears a fairly long distance from the starting point, providing lots of space for catalysts to tame the remainder of the reaction. 

\begin{center}
\patstack[0.6]{A length 724 Herschel loop}{\href{https://conwaylife.com/?rle=71b2o$71b2o5b2o$60b2o16b2o$42b2o17bo$43bo17bobo$30bo11bo19b2o12b2o$30b3o9b2o32b2o$33bo48b2o$20b2o10b2o48b2o$21bo$21bobo$7b2o13b2o50b2o$7b2o36b2o15bo$45b2o15bobo$62b3o$b2o61bo$b2o$5b2o$5b2o15bo$11bo10bobo39b2ob2o$11bo10b3o29bob2o4bo2bobobo16bo$24bo11b2o14b3ob2o4b2obo2bo15b3o$2o34bo14bo13bo8b3o6bo$2o35b3o12b3ob2o7b2o7bo8b2o$39bo14bobo6b2o2bobo3b3o$20bo3b2o28bobo5bo2bo2b2o$19bobo3bo29bo7b2o$19b2o3bo$23bo$19b5obo$13b3o3bo4bobo61b2o$4b2o8bo6bo2bobo61bo$5bo6b3o5b2o3bo60bobo$2b3o81b2o$2bo2$72b2o$20b2obo47bobo$20b2ob3o45bo$26bo43b2o$20b2ob3o$21bobo$21bobo64b2obo$22bo65bob2o2$81b2o$81b2o2$12b2o$12b2o2$3b2obo65bo$3bob2o64bobo$71bobo$69b3ob2o$23b2o43bo$23bo45b3ob2o$21bobo47bob2o$21b2o2$92bo$7b2o81b3o$6bobo60bo3b2o5b3o6bo$6bo61bobo2bo6bo8b2o$5b2o61bobo4bo3b3o$69bob5o$71bo$70bo3b2o$30b2o7bo29bo3bobo$25b2o2bo2bo5bobo28b2o3bo$19b3o3bobo2b2o6bobo14bo$10b2o8bo7b2o7b2ob3o12b3o35b2o$11bo6b3o8bo13bo14bo34b2o$8b3o15bo2bob2o4b2ob3o14b2o11bo$8bo16bobobo2bo4b2obo29b3o10bo$26b2ob2o39bobo10bo$72bo15b2o$88b2o$92b2o$30bo61b2o$30b3o$30bobo15b2o$32bo15b2o36b2o$19b2o50b2o13b2o$71bobo$73bo$11b2o48b2o10b2o$11b2o48bo$17b2o32b2o9b3o$17b2o12b2o19bo11bo$31bobo17bo$33bo17b2o$15b2o16b2o$15b2o5b2o$22b2o!&name=A length 724 Herschel loop}{\includegraphics[width=\textwidth]{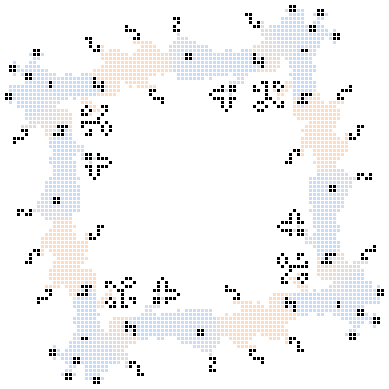}}}
\end{center}

Through the 1990s, many \emph{Herschel conduits} were found --- that is, placements of catalysts that cleanly restore a Herschel at a different location. In particular, Buckingham discovered~\cite{buckingham:conduits} a set of Herschel conduits that can be used to construct arbitrarily long loops. 

Above, we give an example of a Herschel loop built out of two elementary components: the `F117' conduit which translates a Herschel forwards in 117 generations, and the `R64' conduit, which translates and turns a Herschel to the right after 64 generations. A Herschel is placed at the input/output locations of each component; these are not equally spaced as the two types of component take different durations to traverse. We have used four of each component, so a single Herschel travels around the loop in $4 \times 117 + 4 \times 64 = 724$ generations. By deleting all but one Herschel, we have an oscillator with period equal to the full length of the track: 724. With two evenly spaced Herschels the period is 362, and with four, the period is 181. This is the lowest we can manage with this conduit: the next smallest factor of 724 is 181, and there is certainly no space to fit 181 Herschels. 

Crucially, the catalysts used in such conduits are all stable, so the length of the loop is not constrained by the use of periodic sparkers or reflectors. By constructing different loops and placing Herschels with regular spacings, oscillators of \emph{all} sufficiently large periods can be constructed. Buckingham's initial set of conduits found in 1996 established that all periods $p \geq 61$ could be achieved, the bound coming from the minimum repeat time of the components used. A careful argument that all periods $p \geq 61$ are attainable with Herschel loops is given in \cite[\S 3.6]{life:book}.

Some additional Herschel conduits with lower repeat times were discovered the following year, and this bound was compressed first to $p \geq 58$ by Buckingham and Paul Callahan~\cite{buckingham-callahan:bounds}, and later to $p \geq 56$ by Dietrich Leithner using sparkers~\cite{leithner:fast}.

% A collection of 16 stable Herschel conduits by Buckingham and Callahan~\cite{buckingham-callahan:bounds} was shown to be ``universal'': by combining them in various ways, a Herschel can be moved to any position and orientation that is sufficiently far from the origin. \mvrnote{I don't know if this has been carefully proved though.}

\subsubsection{Stable Reflectors and the Snark}

Herschel conduits are limited by their relatively long minimum repeat time and glider loops are limited by the period of the reflector used. There was a long held hope that a \emph{stable} glider reflector could be found that would push the bound of unknown periods lower still.

In the earliest days of Life, it was known that a stable glider reflector must exist. This follows as a theoretical consequence of two important ideas. First, it was argued that \emph{universal computers} exist~\cite[\S 25]{winning-ways:vol2} \cite{wainwright:universal, the-recursive-universe}. That is, one can construct a pattern to simulate an arbitrary Turing machine. It was also argued that \emph{universal constructors} exist: patterns that can follow instructions to construct any \emph{glider-synthesisable} object, that is, any object that can be created by colliding gliders in some combination. In particular, when fed the correct instructions, such a constructor is capable of building both a distant copy of itself, and a distant copy of a universal computer. 

Together, this implies the existence of a stable reflector; the argument goes as follows~\cite[\S 4]{buckingham-callahan:bounds}. The incoming glider hits a cluster of still lifes, destroying them and spawning additional gliders. These gliders can be redirected through additional destructive collisions, and with sufficient time, an almost arbitrary flotilla of gliders can be produced. The flotilla chosen is one that synthesises a universal computer/constructor combination as described above, together with a program that instructs the computer to: 1) Emit a glider at 90 degrees from the input; 2) Reconstruct the initial configuration of still lifes; and, 3) Cleanly self-destruct the universal computer/constructor. This program can be encoded in the position of a single distant still life to be read by the computer, to avoid any concerns about the complexity of having the computer reconstruct a description of itself.

Given a stable glider reflector, we can build a loop from 8 of these reflectors\footnote{We use 8 reflectors rather than 4 to avoid any complications that would arise if the delay imposed on the glider by the reflector were a multiple of 4.} whose length can be any sufficiently large multiple of 8. Placing gliders in the loop at regular intervals, we can achieve any sufficiently large period. This method is massively inefficient: we will see next that very compact stable glider reflectors exist. Still, it was known from the earliest days that at most finitely many oscillator periods could fail to exist; all that remains is stamp collecting.

These early arguments were not constructive, and it was only much later that explicit Turing machines~\cite{rendell:turing-machine, apg:universal} and universal \emph{construction arms}~\cite[\S 11.1]{life:book} \cite{ekstrom:new-arms} were created. An example of a large, universal constructor-based oscillator has been constructed by forum user Goldtiger997~\cite{goldtiger:rro}.

The first explicit stable glider reflector was constructed by Callahan~\cite{reflector-history}, where the chaos from the collision of a glider with a bait object is turned into a Herschel, and Herschel conduits are used to emit another glider and restore the original collision site. The repeat time was quickly reduced from 4840 down to 850 generations, and conduit-based constructions by others eventually reached 444. This was still far too high to construct oscillators with new periods.

\begin{center}
\patstack[0.2]{The Snark}{\href{https://conwaylife.com/?rle=2bo$obo$b2o2$17bo$15b3o$14bo$14b2o5b3o$23bo$22bo2$11bobo$12b2o$12bo$4b2o$3bobo5b2o$3bo7b2o$2b2o2$16bo$12b2obobo$11bobobobo$8bo2bobobobob2o$8b4ob2o2bo2bo$12bo4b2o$10bobo$10b2o!&name=The Snark}{\includegraphics[width=\textwidth]{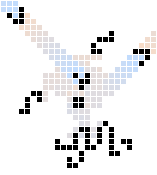}}}
\end{center}

In 2013, Mike Playle used his Bellman~\cite{bellman, bellman-manual} catalyst search program to discover the \emph{Snark}, the first small, stable 90-degree glider reflector. The glider hits a block creating a symmetrical active region known as a \emph{honey farm}, which is shaped by three nearby stable catalysts so as to produce an output glider perpendicular to the input and restore the block, without going through a Herschel as an intermediate stage. 
%The Snark is central to the construction of many engineered patterns in Life, winning a \$100 cash prize for its discovery set by Dave Greene and Matthias Merzenich. 

\begin{center}
\patstack[0.4]{(p43) Snark loop}{\href{https://conwaylife.com/?rle=36b2o$35bobo$29b2o4bo$27bo2bo2b2ob4o$27b2obobobobo2bo$30bobobobo$30bobob2o$31bo2$44b2o$35b2o7bo$35b2o5bobo$42b2o$35bo$34b2o$34bobo2$25bo$24bo$9bo14b3o5b2o$9b3o21bo$12bo17b3o$11b2o17bo2$45b2o$3b2o40bobo$3bo41bo$2obo56b2o$o2b3o4b2o3bo44bo$b2o3bo3b2ob2o47bo$3b4o7b2o26b2o14b5o$3bo15b2o22bo13bo$4b3o12bobo21bobo12b3o$7bo13bo22b2o15bo$2b5o14b2o26b2o7b4o$2bo47b2ob2o3bo3b2o$4bo44bo3b2o4b3o2bo$3b2o56bob2o$19bo41bo$17bobo40b2o$18b2o2$34bo17b2o$32b3o17bo$31bo21b3o$31b2o5b3o14bo$40bo$39bo2$28bobo$29b2o$29bo$21b2o$20bobo5b2o$20bo7b2o$19b2o2$33bo$29b2obobo$28bobobobo$25bo2bobobobob2o$25b4ob2o2bo2bo$29bo4b2o$27bobo$27b2o!&name=(p43) Snark loop}{\includegraphics[width=\textwidth]{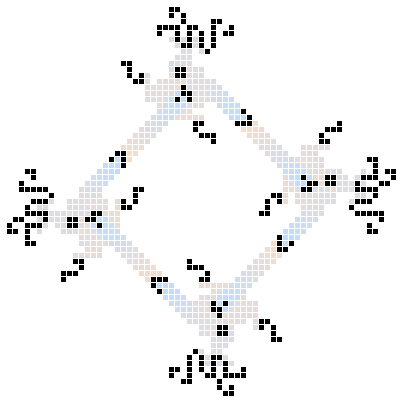}}}
\end{center}

The Snark's minimum repeat time is 43 generations. As it is a stable reflector, it can also reflect at any larger spacing, giving a simple and uniform construction for oscillators of all periods $p \geq 43$. Glider loops using the Snark provided the first known oscillators with periods 43 and 53~\cite{merzenich-snark}. So that our presentation of omniperiodicity is completely self-contained, we give a proof that all periods $p \geq 43$ are indeed attainable.

\begin{theorem*}
For any $p \geq 43$, a Snark loop with period $p$ can be constructed.
\end{theorem*}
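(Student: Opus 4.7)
The plan is to generalise the period-43 construction depicted above to all $p \geq 43$. Four Snarks placed at the corners of a ``diagonal rectangle'' form a closed loop in which a glider is reflected by $90^\circ$ four times and returns to its starting position and direction. If the two independent arm lengths (in cells) are $d_1$ and $d_2$, the loop-closure condition forces the opposite arms to match, so the total round-trip time for a single glider is
\[
T = 8(d_1 + d_2) + 4\delta,
\]
where $\delta$ is the fixed delay imposed by one Snark reflection. In particular, $T$ lies in a single residue class modulo $8$ determined by the Snark itself.

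For a target period $p \geq 43$, I will place $n$ evenly-spaced gliders around such a loop; this yields an oscillator of period $T/n$, provided $T/n \geq 43$ so that consecutive gliders at any Snark respect the minimum repeat time. The task therefore reduces to finding a positive integer $n$ with $np \equiv 4\delta \pmod{8}$ and then setting $d_1 + d_2 = (np - 4\delta)/8$. The known period-43 loop (obtained with $n = 4$ gliders and $T = 172$) pins down the residue $4\delta \bmod 8$, and a short case analysis on $p \bmod 8$ shows that every $p \geq 43$ admits some $n \in \{1, \dots, 8\}$ satisfying the congruence. The arm lengths $d_1, d_2$ can then be chosen as any positive integers summing to the required value.

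The main obstacle I expect is the boundary region: periods just above $43$ for which the smallest admissible $n$ makes $np$ too small for four non-overlapping Snarks to fit geometrically, or for which the resulting $d_1, d_2$ place two Snarks within each other's still-life halos. To handle these, I would either take the next admissible $n$ (adding $8$ to $n$ preserves the congruence while enlarging the loop), or exhibit by hand a small number of alternative loop topologies, such as eight-Snark loops with some reflections reversed, to absorb the exceptional periods. Either route reduces the remaining work to a finite check, after which the modular-plus-geometric argument covers all larger $p$ uniformly.
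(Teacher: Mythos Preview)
Your overall architecture matches the paper's --- four Snarks at the corners of a diagonal rectangle, then insert evenly spaced gliders --- but a factual slip about the Snark's delay sends you into unnecessary casework. The Snark imposes a $2$-generation delay per reflection, so $4\delta = 8$ and the loop time is $T = 8(d_1 + d_2) + 8 = 8(d_1 + d_2 + 1)$, always a multiple of $8$. The paper therefore simply inserts \emph{eight} gliders in every case, giving period $d_1 + d_2 + 1$; for a target $p \geq 43$ one takes $d_1 = \lfloor (p-1)/2 \rfloor$ and $d_2 = \lceil (p-1)/2 \rceil$, both comfortably above the $>13$ clearance needed to keep the Snarks from interfering. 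No case analysis on $p \bmod 8$, no boundary exceptions, no alternative topologies.

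Your inference that the period-$43$ loop has $n = 4$ gliders and $T = 172$ cannot be right: with $\delta = 2$ a four-glider four-Snark loop has period $T/4 = 2(d_1 + d_2) + 2$, which is always even. Worse, had $\delta$ genuinely been odd as your numbers imply, your assertion that some $n \in \{1, \dots, 8\}$ solves $np \equiv 4\delta \pmod 8$ would fail whenever $p \equiv 0 \pmod 8$ (then $np \equiv 0$ for every $n$, but $4\delta \equiv 4$), and your eight-reflector fallback would become load-bearing rather than cosmetic. So the sketch is salvageable, but only because the parameter you misidentified happens to collapse your entire case analysis to the paper's one-line choice $n = 8$.
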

\begin{proof}
Place four Snarks in a rectangular configuration, as in the case of the period-43 oscillator above.

This produces a glider path in the shape of a diagonal rectangle, with a width of $n$ diagonals and a height of $m$ diagonals. The spacings $n$ and $m$ must be large enough so that the Snarks do not destabilize each other ($n, m > 13$ is sufficient). The time taken for one glider to traverse this loop is given by the perimeter of the loop, $2n+2m$, divided by the speed of the glider, $1/4$, plus four times the 2-generation delay imposed by each Snark reflection. In all, this is $4 \times (2n+2m) + 8 = 8(n+m+1)$.

Inserting 8 equally spaced gliders into this loop will create an oscillator with a period $p = n+m+1$, so long as $p \geq 43$. Thus, for any $p \geq 43$ there is a Snark loop with period $p$ taking, for concreteness, $n=\lfloor (p-1)/2 \rfloor$ and $m=\lceil (p-1)/2 \rceil$.
\end{proof}

%It is also possible to construct oscillators based on other stable reflectors, which may be smaller. This is done using a 180-degree Syringe-based reflector to construct the smallest known oscillators of periods 89, 97, and 103 \cite{lifewiki_adjustable_glider_loop}. One can also use periodic glider reflectors such as the bumper or bouncer to construct loops. This is however much less useful for proving omniperiodicity, as prime periods are not a multiple of the reflector period and are not constructible in this fashion.

\subsection{Catalyst Searches}

\begin{center}
\hfill
\patstack[0.3]{(p19) cribbage}{\href{https://conwaylife.com/?rle=4b2o$4bo$b2obo10bo$bo2b2o9b3o$3bo2bo11bo$bob4o10b2o$obo$o2b4o12b3o$b2o3bo11bo3bo6b2o$3b2o5b3o4bo5bo4bobo$3bo5bo3bo4bo3bo5bo$bobo4bo5bo4b3o5b2o$b2o6bo3bo11bo3b2o$10b3o12b4o2bo$29bobo$13b2o10b4obo$13bo11bo2bo$14b3o9b2o2bo$16bo10bob2o$27bo$26b2o!&name=(p19) cribbage}{\includegraphics[width=\textwidth]{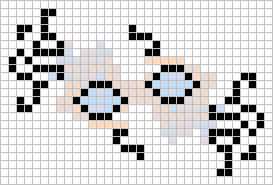}}}
\hfill
\patstack[0.2]{(p34) 74P34}{\href{https://conwaylife.com/?rle=15bo$13b3o$3b2o7bo$4bo7b2o$4bobo$5b2o2$2o$bo$bobo$2b2o11bo$15bo9b2o$9b3o4bo8bo$9bobo11bobo$9bobo11b2o2$2b2o11bobo$bobo11bobo$bo8bo4b3o$2o9bo$11bo11b2o$23bobo$25bo$25b2o2$20b2o$20bobo$13b2o7bo$14bo7b2o$11b3o$11bo!&name=(p34) 74P34}{\includegraphics[width=\textwidth]{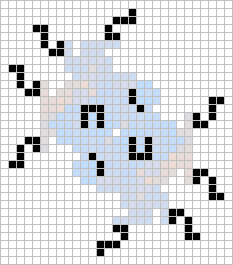}}}
\hfill
~
\end{center}

Just as \texttt{dr} can be seen as a systematic search for billiard tables, we can also attempt a systematic search for hasslers. Hasslers have three components: the active pattern, the catalysts, and the catalyst placements. The active regions that have the best chance of forming hasslers are the ones that occur most often naturally: after being perturbed, they have the best chance of reforming in the correct position. Simon Ekström \cite{small-census} performed a census of 10 million small active patterns, and the resulting list of the most common patterns forms the basis of many of the following searches.

Similarly, the most useful catalysts are the ones that have the best chance of perturbing an active pattern and surviving. These evaluations were run by Dongook Lee~\cite{catalyst-test}, and with a more expansive list by Mitchell Riley~\cite{catalyst-test-mvr}, resulting in a shortlist of the catalysts with the most potential to produce hasslers.

Finally, these catalysts need to be placed in the hopes that the active region will reform. Like drifter searches, each result of a catalyst-based search is a roll of the dice: there is no way of specifying in advance which period you would like to find.

One approach is to simply brute-force every possible position of the catalysts: this is the approach taken by Michael Simkin's \texttt{CatForce}~\cite{catforce}. Such brute-force searches are only feasible for small numbers of catalysts chosen from a small number of options. Instead, we can place catalysts ahead of the reaction ``just in time'' to perturb it, and bail out of testing a configuration if the catalysis fails or one of the catalysts is later destroyed by the active pattern. This can naturally be written as a depth-first search through catalyst positions, and is the approach taken by the \texttt{ptbsearch}~\cite{ptbsearch} and \texttt{Catalyst}~\cite{catalyst} programs.

A second improvement is searching specifically for symmetrical hasslers. Allowing symmetry greatly enlarges the available search space, more than simply searching for hasslers with an additional catalyst would, as there are far more ways for copies of the active region to interact with one another than there are ways to interact with an additional catalyst successfully. 

A search script by Raucci~\cite{hdp-spark} incorporates both of these improvements, focusing on oscillators supported by sparks. Rather than placing entire catalysts, an isolated spark is placed to interact with the active region. If the active region then reoccurs, the ``partial'' oscillator can sometimes be completed to a true oscillator by placing sparkers of an appropriate period. 
%Over 300 oscillators have been found this way, including small guns of periods 21, 32, 54, 55, and 117. 

\texttt{Symmetric CatForce}~\cite{symmetric-catforce} is a modified version of (asymmetric) \texttt{CatForce} that also incorporates these ideas, using a fast Life-step algorithm \cite{rokicki:algorithms} by Tomas Rokicki. The first known oscillators of periods 34 and 19 were found by Riley using \texttt{Symmetric CatForce}, both coincidentally having 2-fold rotational symmetry. 
%The period-19 oscillator is a honey farm hassler, using a catalyst found by Barrister to restore the honey farm. The period-34 oscillator uses four eater 1s on each side to restore the pi-heptomino (and some dying sparks).

\subsubsection{Dependent Reflectors}

\begin{center}
\patstack[0.35]{(p31) dependent glider loop}{\href{https://conwaylife.com/?rle=35b2o$35b2o$28bo$27bobo$26bo3bo$2o8bo15bo3bo$2o6b3o15bo3bo$7bo7bo11bobo3b2o$7b2o6b2o11bo4bobo$14bobo18bo$4b3o28b2o$3bo3bo$2bo5bo$2bo5bo$2bo5bo22bobo$3bo3bo24b2o$4b3o25bo8$8bo25b3o$7b2o24bo3bo$7bobo22bo5bo$32bo5bo$32bo5bo$33bo3bo$4b2o28b3o$5bo18bobo$5bobo4bo11b2o6b2o$6b2o3bobo11bo7bo$10bo3bo15b3o6b2o$10bo3bo15bo8b2o$10bo3bo$11bobo$12bo$4b2o$4b2o!&name=(p31) dependent glider loop}{\includegraphics[width=\textwidth]{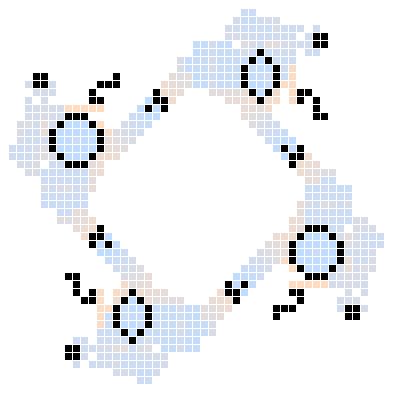}}}
\end{center}

Among hasslers, a large, fruitful, and tractable search space exists with \emph{dependent glider reflectors}. These reflectors are much like traditional glider reflectors such as the Snark, but are dependent on a regularly-spaced stream of input gliders. Any interruption or disturbance in the glider stream destroys the reflector. There is no assumption that a glider will strike a bait object to initiate the reflection sequence. Instead, the glider will often strike an actively-evolving pattern in the dependent reflector, and this disturbed pattern will be guided back to its original location while emitting another glider. 

This is demonstrated above in a period-31 dependent glider loop discovered by Merzenich. The honey farm interacts with the eater in precisely the same way as used in the Snark reflector, emitting a glider. Rather than simply settling down to a block, the active region interacts with the incoming glider and outermost block to reform a honey farm directly.

Dependent reflectors are far easier to find than stable reflectors. Since evolving active regions both 1) are larger than the still lifes useful in traditional glider reflectors and 2) have more than one phase at which the glider can strike them, there are many more available possibilities for the initial glider collision. As an extreme example, a glider striking a block has only three distinct reactions that could lead to a stable reflector --- one yielding a pi-heptomino, one a honey farm, and one that annihilates the block and glider in 28 generations. Meanwhile, dependent reflector targets can easily exceed 100 distinct useful interactions. 

A search for dependent reflectors can be seen as a hassler search where the incoming glider is treated as a ``catalyst'' that does not have to recover. In this way, dependent reflectors enlarge the available search space in much the same way as searching with symmetry. Of course, to actually be a ``reflector'', the reaction has to at some point emit another glider.

% Longer-lived active regions tend to yield more distinct useful interactions, and are less likely to accidentally destroy the second incoming glider in a stream with the explosion from the collision with the first, so dependent reflectors have a bias towards larger periods than other methods.\ccnote{maybe add a footnote to explain this a little bit?}

% While a dependent reflector found with this method could, in theory, use any active region, and indeed, searches have been conducted with many, some active regions are much more likely to work than others. As any dependent reflector must release a glider, the typical targets of choice for finding dependent reflectors consist of a honey farm with either an eater or a combination of an eater and a more complex catalyst called a ``chucklebait'', positioned in a particular location to release a glider early in its evolution, meaning only the honey farm has to be restored by the glider and additional catalysts. Luckily, honey farms are among the most common evolutionary sequences.

\begin{center}
\patstack[0.5]{(p41) 204P41}{\href{https://conwaylife.com/?rle=34bo$32b3o$31bo$16b2o13b2o$17bo$17bobo$18b2o$29bo$28bobo$27bo3bo$27bo3bo$27bo3bo$28bobo$29bo2$5b2o24b2o$6bo24bo8bobo$4bo14b2o11b3o6b2o$2b4o10b2o2bo13bo6bo$bo12bob2o3b4o$o2b3o8b2o3bo3b2obo$b2o2bo8bob2o3bo3b2o$3b2o5b2o4b4o3b2obo$3bo5b2o9bo2b2o$4bo6bo8b2o$b3o$bo5b2o15bo27bo$8bo14bobo24bobo$5b3o15b2o26b2o$5bo2$67bo$20b2o26b2o15b3o$20bobo24bobo14bo$20bo27bo15b2o5bo$69b3o$51b2o8bo6bo$48b2o2bo9b2o5bo$46bob2o3b4o4b2o5b2o$46b2o3bo3b2obo8bo2b2o$46bob2o3bo3b2o8b3o2bo$48b4o3b2obo12bo$31bo6bo13bo2b2o10b4o$30b2o6b3o11b2o14bo$30bobo8bo24bo$40b2o24b2o2$43bo$42bobo$41bo3bo$41bo3bo$41bo3bo$42bobo$43bo$53b2o$53bobo$55bo$40b2o13b2o$41bo$38b3o$38bo!&name=(p41) 204P41}{\includegraphics[width=\textwidth]{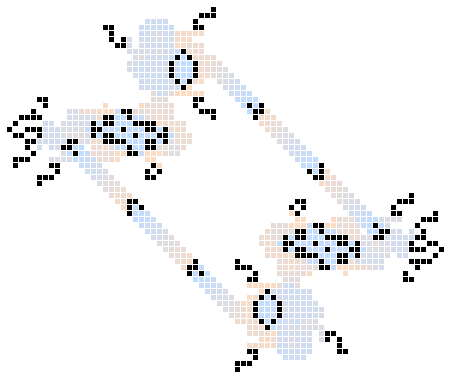}}}
\end{center}

\texttt{LocalForce}~\cite{localforce} by Nico Brown is a variant of \texttt{CatForce} that supports placing gliders as catalysts, and it quickly yielded numerous dependent reflectors based on the honey-farm reaction. 
% These reflectors typically had periods between 40 and 80, though none with the final remaining period of 41. 
Searches with other active regions yielded numerous partial results where the active region repeated with period 41, but did not release the glider required to act as a dependent reflector. While these partials did not create gliders, several created sparks. Upon further modifying \texttt{LocalForce} to search for interactions involving these sparks, a period-41 dependent reflector combining two active regions was quickly found, yielding a period-41 loop and proving that Life is omniperiodic.

\subsection{Future}

The omniperiodicity problem has served as a rallying point for the Life community, leading to the creation of the many tools and programs we have detailed above. 

However, there are still vast gaps in our knowledge about what is possible in Life~\cite{lifewiki:status-page}. Perhaps the most significant is which of the theoretically possible\footnote{No pattern can propagate faster than the `speed of light' $c$, the distance of one cell per generation. For spaceships moving through empty space, the limits are much lower: $c/2$ for orthogonal motion and $c/4$ for diagonal motion~\cite[\S 4.5]{life:book}.} \emph{spaceship} velocities can be realised. Recall that a spaceship is a periodic pattern that reoccurs at some translation from its original position; one might think of oscillators as stationary spaceships. Similarly to oscillators, there are non-constructive arguments that show that all sufficiently slow spaceship velocities are attainable, but low-period spaceships with new velocities are exceptionally difficult to find. A notable example is Sir Robin~\cite{apg:sir-robin}, the first discovered elementary spaceship with a non-orthogonal and non-diagonal velocity. 
%\mvrnote{something about search techniques? \cite{eppstein:spaceships} other cites}

Another class of periodic objects is \emph{glider guns};  patterns that release gliders at regular intervals. Glider guns are known that produce glider streams of all possible\footnote{Glider guns must have a period $p \geq 14$ for the gliders to not interfere with each other.} output periods. However, for a handful of periods, no ``true-period" gun exists: the gun does not have the same period as the output stream. For example, the known gun that produces a period-15 glider stream consists of the period-30 Gosper glider gun together with a second period-30 mechanism that inserts a glider into each gap. At the time of writing, the remaining true-period guns to be found have period $14 \leq p \leq 19$, and $p = 23, 26, 29, 31, 35, 38, 39, 47$ and $53$. 

\begin{center}
\hfill
\patstack[0.2]{(p28) newshuttle}{\href{https://conwaylife.com/?rle=26b2o$20bo3bo2bo2bo$18b3o3b3o3b3o$8b2o7bo15bo7b2o$9bo7b2o5b3o5b2o7bo$9bobo11bo3bo11bobo$10b2o11b2ob2o11b2o2$3bo43bo$3b3o39b3o$6bo7b3o17b3o7bo$5b2o7bobo17bobo7b2o$14b3o3b3o5b3o3b3o$20bobo5bobo$10b3o7b3o5b3o7b3o$10bobo25bobo$10b3o25b3o$3b2o18b2ob2o18b2o$2bobo18bo3bo18bobo$2bo21b3o21bo$b2o9b3o11bobo7b3o9b2o$12bobo12b2o7bobo$12b3o5b2o14b3o$2o3b2o10b2o2bo10b2o10b2o$obobobo10bob2o10bobo10bobob2o$2bobo14bo11bo14bobo$b2obobo10bobo10b2obo10bobobobo$5b2o10b2o10bo2b2o10b2o3b2o$12b3o14b2o5b3o$12bobo7b2o12bobo$b2o9b3o7bobo11b3o9b2o$2bo21b3o21bo$2bobo18bo3bo18bobo$3b2o18b2ob2o18b2o$10b3o25b3o$10bobo25bobo$10b3o7b3o5b3o7b3o$20bobo5bobo$14b3o3b3o5b3o3b3o$5b2o7bobo17bobo7b2o$6bo7b3o17b3o7bo$3b3o39b3o$3bo43bo2$10b2o11b2ob2o11b2o$9bobo11bo3bo11bobo$9bo7b2o5b3o5b2o7bo$8b2o7bo15bo7b2o$18b3o3b3o3b3o$20bo2bo2bo3bo$23b2o!&name=(p28) newshuttle}{\includegraphics[width=\textwidth]{p28}}}
\hfill
\patstack[0.2]{(p28) lcm(4, 14) oscillator}{\href{https://conwaylife.com/?rle=9b2o5b2o$9b2o5b2o4$9b2o5b2o$8bo2bo3bo2bo$7bo3bo3bo3bo$11b2ob2o$b2o3bo5bobo5bo$o2bo3bo3bo3bo3bo$obo2bo5bo3bo$bo6bo9bo$2b2obo2bo2bo3bo2bo$4bo4b2o5b2o!&name=lcm(4, 14) oscillator}{\includegraphics[angle=90, width=\textwidth]{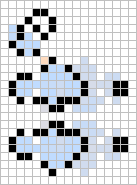}}}
\hfill
~
\end{center}

Even without leaving the search for oscillators, there is still much to do. For example, there is interest in finding oscillators with as low a population as possible, so-called Smallest Known Oscillators of Period $p$~\cite{skopje, lifewiki:oscillator}. In the above sections we have generally shown the first known oscillator of each period, but most are no longer the smallest. For example, the period-28 newshuttle is over 6 times as large as a $\lcm(4, 14)$ oscillator, the smallest known period-28 oscillator at the time of writing.

The oscillator hunt continues if we add criteria that the oscillators must satisfy. Indeed, the rules have already changed at least once as, in the past, LCM oscillators were considered ``too boring to count'' as filling a period. This is just a matter of aesthetics, but those worried about the interestingness of oscillators can still be satisfied: at least one non-LCM oscillator exists for each period first found using an LCM oscillator, so what one might call \emph{omni-interesting-periodicity} holds.

\begin{center}
\hfill
\patstack[0.2]{(p2) phoenix}{\href{https://conwaylife.com/?rle=3bo$3bobo$bo$6b2o$2o$6bo$2bobo$4bo!&name=(p2) phoenix}{\includegraphics[width=\textwidth]{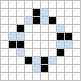}}}
\hfill
\patstack[0.23]{statorless p5}{\href{https://conwaylife.com/?rle=bo11bo$2o11b2o$o13bo$2o2b3ob3o2b2o$2ob2o5b2ob2o$bo2b2o3b2o2bo$bo3bo3bo3bo$bo3bo3bo3bo$bo2b2o3b2o2bo$2ob2o5b2ob2o$2o2b3ob3o2b2o$o13bo$2o11b2o$bo11bo!&name=statorless p5}{\includegraphics[width=\textwidth]{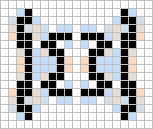}}}
\hfill
~
\end{center}

A class of oscillators for which we do not have all periods is \emph{strictly volatile} oscillators: patterns where every cell oscillates at the full period. The vast majority of oscillators we have seen fail this requirement: of all the oscillators displayed in this document, only figure eight, tumbler, pentadecathlon and Rob's p16 are strictly volatile. Brown~\cite{brown:strictly-volatile} gives a systematic construction for strictly volatile oscillators with all periods $p \geq 949$, using opposing flotillae of spaceships to construct each other. Below that, very little is known; even period-7 is unresolved. Above we have displayed a strictly volatile period-2 oscillator known to the MIT group in the early 70s, and the only known strictly volatile period-5 oscillator, discovered by Josh Ball in 2016. Most search techniques we have seen are useless here. For example, catalyst searches and Herschel conduits rely on stable catalysts, and drifter searches typically result in large stable supports.

The use of constraint solvers is an under-explored search technique. The existence of an oscillator of some size and period can be expressed as a Boolean satisfiability problem and passed to a standard SAT solver~\cite[p.~201--204]{taocp:4b} \cite{lls}. The periods for which this is currently effective are still fairly low, however. Constraint solvers have helped resolve other questions about Life, such as the ``generalized grandfather problem''~\cite{salo-torma:oracles}, the maximum-density still life problem~\cite{chu-stuckey:maximum-density}, and the computational universality of determining whether a pattern can be reversed even a single step \cite{salo-torma:backwards}.

\iffalse
To sum up, there are still future goals for the Life community to pursue, even after the solution to omniperiodicity:
\ccnote{any significant goals of modern Life research, other than the ones listed below?}
\begin{itemize}
\item Finding new spaceships of different speeds and different types
\item Finding new interesting oscillators and glider guns, especially true-period glider guns with a period where no examples are previously known
\item Finding new "engine-based" oscillators --- i.e., oscillators dependent on a fixed-period engine mechanism, that can not be adjusted by changing the length of a signal-carrying loop
\item Finding new sparkers and catalysts
\item Finding new glider reflectors and Herschel conduits
\item And more...
\end{itemize}
\fi

\medskip
Life's work is never done!

\newpage

\section*{\hfil First Known Oscillators of All Periods \hfil}\label{sec:table}

\begin{center}
\begin{longtable}{llll}
Period & Name & Discoverer & Year \\ \midrule \endhead
1 & \href{https://conwaylife.com/wiki/Block}{block} & John H. Conway & 1969 \\
2 & \href{https://conwaylife.com/wiki/Blinker}{blinker} & John H. Conway & 1969 \\
3 & \href{https://conwaylife.com/wiki/Pulsar}{pulsar} & John H. Conway & 1970 \\
4 & \href{https://conwaylife.com/wiki/Pinwheel}{pinwheel} & Simon Norton & 1970 \\
5 & \href{https://conwaylife.com/wiki/Octagon_2}{octagon 2} & Sol Goodman, Arthur C. Taber & 1971 \\
6 & \href{https://conwaylife.com/wiki/$rats}{\$rats} & David Buckingham & 1972 \\
7 & \href{https://conwaylife.com/wiki/Burloaferimeter}{burloaferimeter} & David Buckingham & 1972 \\
8 & \href{https://conwaylife.com/wiki/Figure_eight}{figure eight} & Simon Norton & 1970 \\
9 & \href{https://conwaylife.com/wiki/Worker_bee}{worker bee} & David Buckingham & 1972 \\
10 & \href{https://conwaylife.com/wiki/42P10.3}{42P10.3} & David Buckingham & $\leq$1976 \\
11 & \href{https://conwaylife.com/wiki/38P11.1}{38P11.1} & David Buckingham & 1977 \\
12 & \href{https://conwaylife.com/wiki/Dinner_table}{dinner table} & Robert T. Wainwright & 1972 \\
13 & \href{https://conwaylife.com/wiki/Buckingham%27s_p13}{Buckingham's p13} & David Buckingham & 1976 \\
14 & \href{https://conwaylife.com/wiki/Tumbler}{tumbler} & George D. Collins, Jr. & 1970 \\
15 & \href{https://conwaylife.com/wiki/Pentadecathlon}{pentadecathlon} & John H. Conway & 1970 \\
16 & \href{https://conwaylife.com/wiki/Two_pre-L_hasslers}{two pre-L hassler} & Robert T. Wainwright & 1983 \\
17 & \href{https://conwaylife.com/wiki/54P17.1}{54P17.1} & Dean Hickerson & 1997 \\
18 & \href{https://conwaylife.com/wiki/117P18}{117P18} & David Buckingham & $\leq$1991 \\
19 & \href{https://conwaylife.com/wiki/Cribbage}{cribbage} & Mitchell Riley & 2023 \\
20 & $\lcm(4,5)$ oscillator & ? & $\leq$1992 \\
21 & $\lcm(3,7)$ oscillator & ? & $\leq$1992 \\
22 & \href{https://conwaylife.com/wiki/168P22.1}{168P22.1} & Noam D. Elkies & 1997 \\
23 & \href{https://conwaylife.com/wiki/David_Hilbert}{David Hilbert} & Luka Okanishi, praosylen & 2019 \\
24 & $\lcm(3,8)$ oscillator & ? & $\leq$1992 \\
25 & \href{https://conwaylife.com/wiki/134P25}{134P25} & Noam D. Elkies & 1994 \\
26 & \href{https://conwaylife.com/wiki/P26_pre-pulsar_shuttle}{p26 pre-pulsar shuttle} & David Buckingham & 1983 \\
27 & \href{https://conwaylife.com/wiki/123P27.1}{123P27.1} & Noam D. Elkies & 2002 \\
28 & \href{https://conwaylife.com/wiki/Newshuttle}{newshuttle} & David Buckingham & 1973 \\
29 & \href{https://conwaylife.com/wiki/P29_pre-pulsar_shuttle}{p29 pre-pulsar shuttle} & David Buckingham & 1980 \\
30 & \href{https://conwaylife.com/wiki/Queen_bee_shuttle}{queen bee shuttle} & Bill Gosper, Robert W. April & 1970 \\
31 & \href{https://conwaylife.com/wiki/Merzenich%27s_p31}{Merzenich's p31} & Matthias Merzenich & 2010 \\
32 & \href{https://conwaylife.com/wiki/Gourmet}{gourmet} & David Buckingham & 1978 \\
33 & \href{https://conwaylife.com/wiki/258P3_on_Achim%27s_p11}{258P3 on Achim's p11} & Noam D. Elkies, Achim Flammenkamp & 1997 \\
34 & \href{https://conwaylife.com/wiki/74P34}{74P34} & Mitchell Riley & 2022 \\
35 & toaster on 44P7.2 & Dean Hickerson, David Buckingham & $\leq$1992 \\
36 & \href{https://conwaylife.com/wiki/P36_toad_hassler}{p36 toad hassler} & Robert T. Wainwright & 1984 \\
37 & \href{https://conwaylife.com/wiki/Beluchenko%27s_p37}{Beluchenko's p37} & Nicolay Beluchenko & 2009 \\
38 & \href{https://conwaylife.com/wiki/44P38}{Raucci's p38} & David Raucci & 2022 \\
39 & \href{https://conwaylife.com/wiki/134P39.1}{134P39.1} & Noam D. Elkies, David Buckingham & 2000 \\
40 & \href{https://conwaylife.com/wiki/P40_B-heptomino_shuttle}{p40 B-heptomino shuttle} & David Buckingham & $\leq$1991 \\
41 & \href{https://conwaylife.com/wiki/204P41}{204P41} & Nico Brown & 2023 \\
42 & unix on 44P7.2 & ? & $\leq$1992 \\
$\geq$43 & \href{https://conwaylife.com/wiki/P43_Snark_loop}{Snark loop} & Mike Playle & 2013 \\
44 & \href{https://conwaylife.com/wiki/P44_pi-heptomino_hassler}{p44 pi-heptomino hassler} & David Buckingham & 1992 \\
45 & \href{https://conwaylife.com/wiki/Pentadecathlon_on_snacker}{pentadecathlon on snacker} & Mark Niemiec, John H. Conway & $\leq$1992 \\
46 & \href{https://conwaylife.com/wiki/Twin_bees_shuttle}{twin bees shuttle} & Bill Gosper & 1971 \\
47 & \href{https://conwaylife.com/wiki/P47_pre-pulsar_shuttle}{p47 pre-pulsar shuttle} & David Buckingham & 1982 \\
48 & $\lcm(3, 16)$ oscillator & Bill Gosper, Achim Flammenkamp & 1994 \\
49 & \href{https://conwaylife.com/wiki/P49_glider_shuttle}{p49 glider loop} & Noam D. Elkies & 1999 \\
$50+40n$ & \href{https://conwaylife.com/wiki/P50_glider_shuttle}{p$(
50 + 40n)$ glider shuttle} & Dean Hickerson & 1992 \\
51 & \href{https://conwaylife.com/wiki/Beluchenko%27s_p51}{Beluchenko's p51} & Nicolay Beluchenko & 2009 \\
52 & \href{https://conwaylife.com/wiki/Four_eaters_hassling_lumps_of_muck}{lumps of muck hassler} & David Buckingham & 1977 \\
54 & \href{https://conwaylife.com/wiki/P54_shuttle}{p54 shuttle} & David Buckingham & $\leq$1976 \\
55 & \href{https://conwaylife.com/wiki/P55_pre-pulsar_hassler}{p55 pre-pulsar hassler} & David Buckingham & 1986 \\
56 & \href{https://conwaylife.com/wiki/P56_B-heptomino_shuttle}{p56 B-heptomino shuttle} & David Buckingham & $\leq$1991 \\
57 & \href{https://conwaylife.com/wiki/P57_Herschel_loop_1}{p57 Herschel loop} & Dietrich Leithner & 1997 \\
58 & \href{https://conwaylife.com/wiki/P58_toadsucker}{p58 toadsucker} & Bill Gosper, Mark Niemiec & 1994 \\
59 & \href{https://conwaylife.com/wiki/P59_Herschel_loop_1}{p59 Herschel loop} & David Buckingham & 1997 \\
$60 + 120n$ & \href{https://conwaylife.com/wiki/P60_glider_shuttle}{p60 glider shuttle} & ? & $\leq$1971 \\
$\geq$61 & \href{https://conwaylife.com/wiki/P61_Herschel_loop_1}{Herschel loops} & David Buckingham & 1996 \\
$66+24n$ & \href{https://conwaylife.com/wiki/P42_glider_shuttle}{p$(66+24n)$ glider shuttle} & Robert T. Wainwright & 1984 \\
72 & \href{https://conwaylife.com/wiki/Two_blockers_hassling_R-pentomino}{R-pentomino hassler} & Robert T. Wainwright & 1990 \\
$75+120n$ & \href{https://conwaylife.com/wiki/6_bits}{6 bits} & Robert T. Wainwright & 1984 \\
80 & $\lcm(5, 16)$ oscillator & Dean Hickerson, Bill Gosper & 1994 \\
84 & \href{https://conwaylife.com/wiki/Pi_orbital}{pi orbital} & Noam Elkies & 1995 \\
87 & \href{}{$\lcm(3,29)$ oscillator} & Bill Gosper & 1994 \\
$44n$ & \href{}{p$44n$ glider loop} & David Buckingham & 1992 \\
$30n$ & \href{}{p$30n$ glider loop} & ? & $\leq$1973 \\
$46n$ & \href{}{p$46n$ glider loop} & Bill Gosper & 1971 \\
$94n$ & p$94n$ glider loop & Dean Hickerson & 1990 \\
96 & \href{https://conwaylife.com/wiki/P96_Hans_Leo_hassler}{p96 Hans Leo hassler} & Noam D. Elkies & 1995 \\
100 & \href{https://conwaylife.com/wiki/Centinal}{centinal} & Bill Gosper & $\leq$1987 \\
102 & \href{}{p$(102+18n)$ traffic jam} & Dean Hickerson & 1994 \\
$100+4n$ & \href{}{p$(100+4n)$ traffic jam} & Bill Gosper & 1994 \\
105 & \href{}{pentadecathlon on 44P7.2} & ? & $\leq$1992 \\
108 & \href{https://conwaylife.com/wiki/P108_toad_hassler}{p108 toad hassler} & David Buckingham & $\leq$1991 \\
$100+10n$ & \href{}{p$(100+10n)$ traffic jam} & Bill Gosper & 1994 \\
116 & \href{}{$\lcm(4,29)$ oscillator} & Bill Gosper & 1994 \\
124 & \href{https://conwaylife.com/wiki/P124_lumps_of_muck_hassler}{p124 lumps of muck hassler} & Dean Hickerson & 1994 \\
125 & \href{}{p$25n$ glider loop} & Noam D. Elkies & 1996 \\
126 & \href{}{p$(102+6n)$ traffic jam} & Noam D. Elkies & $\leq$1996 \\
128 & \href{https://conwaylife.com/wiki/Period-256_glider_gun#Gallery}{p128 Herschel loop} & David Buckingham & 1991 \\
% 130, $50+40n$ & \href{https://conwaylife.com/wiki/P50_glider_shuttle}{p$(50+40n)$ glider shuttle} & Dean Hickerson & 1992 \\
$135+120n$ & \href{https://conwaylife.com/wiki/106P135}{p$(135+120n)$ glider shuttle} & Bill Gosper & 1989 \\
$136+8n$ & \href{}{p$(136+8n)$ Herschel loop} & David Buckingham & 1991 \\
$144+72n$ & \href{https://conwaylife.com/wiki/Gunstar}{p$(144+72n)$ gunstar} & David Buckingham & 1990 \\
145 & \href{}{$\lcm(5,29)$ oscillator} & Bill Gosper & 1994 \\
$150+3n$ & \href{}{p$(150+3n)$ Herschel loop} & David Buckingham & 1991 \\
$150+5n$ & \href{}{p$(150+5n)$ Herschel loop} & Dean Hickerson, David Buckingham & 1992 \\
$152+4n$ & \href{}{p$(152+4n)$ Herschel loop} & David Buckingham & 1991 \\
$165+120n$ & \href{https://conwaylife.com/wiki/P165_glider_shuttle}{p$(165+120n)$ glider shuttle} & Robert T. Wainwright & $\leq$1989 \\
$100n$ & \href{}{p$100n$ glider loop} & Bill Gosper & $\leq$1987 \\
$230+40n$ & \href{https://conwaylife.com/wiki/P50_glider_shuttle#Gallery}{p$(230+40n)$ glider shuttle} & Dean Hickerson & 1992 \\
$240n$ & \href{}{p$240n$ glider loop} & John H. Conway & 1970 \\
246 & \href{https://conwaylife.com/wiki/P42_glider_shuttle}{pinball} variant & Dean Hickerson & 1992 \\
256 & \href{https://conwaylife.com/wiki/Period-256_glider_gun#Gallery}{p256 Herschel loop} & David Buckingham & 1991 \\
$270+24n$ & \href{https://conwaylife.com/wiki/P42_glider_shuttle}{pinball} & Dean Hickerson & 1989 \\
329 & $\lcm(7, 47)$ oscillator & Dean Hickerson, David Buckingham & $\leq$1992 \\
$856n$ & \href{https://conwaylife.com/wiki/Period-856_glider_gun}{p$856n$ glider loop}& David Buckingham & 1991
\end{longtable}
\end{center}
For the oscillators whose discovery year is marked with $\leq$, the exact discovery date was not recorded.

\newpage

% lifeviewer settings to match the colours from the textbook:
% #C [[ ZOOM 8 GRID GRIDMAJOR 0 COLOR GRID 192 192 192 COLOR ALIVE 0 0 0 COLOR ALIVERAMP 0 0 0 COLOR DEAD 192 220 255 COLOR DEADRAMP 255 220 192 COLOR BACKGROUND 255 255 255 QUALITY ]]
% #C [[ HISTORYSTATES 18 ]]

% white gridlines for huge patterns
% #C [[ ZOOM 8 GRID GRIDMAJOR 0 COLOR GRID 255 255 255 COLOR ALIVE 0 0 0 COLOR ALIVERAMP 0 0 0 COLOR DEAD 192 220 255 COLOR DEADRAMP 255 220 192 COLOR BACKGROUND 255 255 255 QUALITY ]]

\section*{\hfil Gallery \hfil}\label{sec:gallery}

On the right we provide the patterns in RLE format, a compact representation that can be pasted into almost any Life software (for example, \href{https://golly.sourceforge.io/}{Golly} or \href{https://lazyslug.com/lifeviewer/}{
LifeViewer}).
\iffalse
\ccnote{should the p30 entry be the eightfold queen bee shuttle, or the block-supported queen bee shuttle?}
\fi

\patcols{(p1) Block}{\href{https://conwaylife.com/?rle=2o$2o!&name=(p1) Block}{\includegraphics[width=\textwidth]{p1}}}
{\detokenize{x = 2, y = 2, rule = B3/S23} \\
\detokenize{2o$2o!}}

\patcols{(p2) Blinker}{\href{https://conwaylife.com/?rle=3o!&name=(p2) Blinker}{\includegraphics[width=\textwidth]{p2}}}
{\detokenize{x = 3, y = 1, rule = B3/S23} \\
\detokenize{3o!}}

\patcols{(p3) Pulsar}{\href{https://conwaylife.com/?rle=2b3o3b3o2$o4bobo4bo$o4bobo4bo$o4bobo4bo$2b3o3b3o2$2b3o3b3o$o4bobo4bo$o4bobo4bo$o4bobo4bo2$2b3o3b3o!&name=(p3) Pulsar}{\includegraphics[width=\textwidth]{p3}}}
{\detokenize{x = 13, y = 13, rule = B3/S23
2b3o3b3o2$o4bobo4bo$o4bobo4bo$o4bobo4bo$2b3o3b3o2$2b3o3b3o$o4bobo4bo$o
4bobo4bo$o4bobo4bo2$2b3o3b3o!}}

\patcols{(p4) Pinwheel}{\href{https://conwaylife.com/?rle=6b2o$6b2o2$4b4o$2obo2bobo$2obobo2bo$3bo3b2ob2o$3bo4bob2o$4b4o2$4b2o$4b2o!&name=(p4) Pinwheel}{\includegraphics[width=\textwidth]{p4}}}
{\detokenize{x = 12, y = 12, rule = B3/S23
6b2o$6b2o2$4b4o$2obo2bobo$2obobo2bo$3bo3b2ob2o$3bo4bob2o$4b4o2$4b2o$4b
2o!}}

\patcols{(p5) Octagon 2}{\href{https://conwaylife.com/?rle=3b2o$2bo2bo$bo4bo$o6bo$o6bo$bo4bo$2bo2bo$3b2o!&name=(p5) Octagon 2}{\includegraphics[width=\textwidth]{p5}}}
{\detokenize{x = 8, y = 8, rule = B3/S23
3b2o$2bo2bo$bo4bo$o6bo$o6bo$bo4bo$2bo2bo$3b2o!}}

\patcols{(p6) \$rats}{\href{https://conwaylife.com/?rle=5b2o$6bo$4bo$2obob4o$2obo5bobo$3bo2b3ob2o$3bo4bo$4b3obo$7bo$6bo$6b2o!&name=(p6) $rats}{\includegraphics[width=\textwidth]{p6}}}
{\detokenize{x = 12, y = 11, rule = B3/S23
5b2o$6bo$4bo$2obob4o$2obo5bobo$3bo2b3ob2o$3bo4bo$4b3obo$7bo$6bo$6b2o!}}

\patcols{(p7) Burloaferimeter}{\href{https://conwaylife.com/?rle=4b2o$5bo$4bo$3bob3o$3bobo2bo$2obo3bobo$2obobo2bo$4b4o2$4b2o$4b2o!&name=(p7) Burloaferimeter}{\includegraphics[width=\textwidth]{p7}}}
{\detokenize{x = 10, y = 11, rule = B3/S23
4b2o$5bo$4bo$3bob3o$3bobo2bo$2obo3bobo$2obobo2bo$4b4o2$4b2o$4b2o!}}

\patcols{(p8) Figure eight}{\href{https://conwaylife.com/?rle=3o$3o$3o$3b3o$3b3o$3b3o!&name=(p8) Figure eight}{\includegraphics[width=\textwidth]{p8}}}
{\detokenize{x = 6, y = 6, rule = B3/S23} \\
\detokenize{3o$3o$3o$3b3o$3b3o$3b3o!}}

\patcols{(p9) Worker bee}{\href{https://conwaylife.com/?rle=2o12b2o$bo12bo$bobo8bobo$2b2o8b2o2$5b6o2$2b2o8b2o$bobo8bobo$bo12bo$2o12b2o!&name=(p9) Worker bee}{\includegraphics[width=\textwidth]{p9}}}
{\detokenize{x = 16, y = 11, rule = B3/S23
2o12b2o$bo12bo$bobo8bobo$2b2o8b2o2$5b6o2$2b2o8b2o$bobo8bobo$bo12bo$2o
12b2o!}}

\patcols{(p10) 42P10.3}{\href{https://conwaylife.com/?rle=6b2o$6b2o2$4b4o$3bo4bo$o2b5obo$3o6bo$3bo3bobob2o$2bo2bo4bobo$2bob2ob3o$3bo2bo$4bo2bo$5b2o!&name=(p10) 42P10.3}{\includegraphics[width=\textwidth]{p10}}}
{\detokenize{x = 13, y = 13, rule = B3/S23
6b2o$6b2o2$4b4o$3bo4bo$o2b5obo$3o6bo$3bo3bobob2o$2bo2bo4bobo$2bob2ob3o
$3bo2bo$4bo2bo$5b2o!}}

\patcols{(p11) 38P11.1}{\href{https://conwaylife.com/?rle=2b2ob2o$3bobobo$3bo4bo$2obo5bo$2obo6bo$3bobo5bo$3bob2o3b2o$4bo$5b7o$11bo$7b2o$7b2o!&name=(p11) 38P11.1}{\includegraphics[width=\textwidth]{p11}}}
{\detokenize{x = 12, y = 12, rule = B3/S23
2b2ob2o$3bobobo$3bo4bo$2obo5bo$2obo6bo$3bobo5bo$3bob2o3b2o$4bo$5b7o$11b
o$7b2o$7b2o!}}

\patcols{(p12) Dinner table}{\href{https://conwaylife.com/?rle=bo$b3o7b2o$4bo6bo$3b2o4bobo$9b2o2$5b3o$5b3o$2b2o$bobo4b2o$bo6bo$2o7b3o$11bo!&name=(p12) Dinner table}{\includegraphics[width=\textwidth]{p12}}}
{\detokenize{x = 13, y = 13, rule = B3/S23
bo$b3o7b2o$4bo6bo$3b2o4bobo$9b2o2$5b3o$5b3o$2b2o$bobo4b2o$bo6bo$2o7b3o$11bo!}}

\patcols{(p13) Buckingham's p13}{\href{https://conwaylife.com/?rle=4bo15bo$3bobo13bobo$3bobo13bobo$b3ob2o11b2ob3o$o23bo$b3ob2o11b2ob3o$3bob2o11b2obo$10bo3bo$11bobo$10b2ob2o$8bo2bobo2bo$7bobobobobobo$7b2o2bobo2b2o$11bobo$12bo!&name=Buckingham's p13}{\includegraphics[width=\textwidth]{p13}}}
{\detokenize{x = 25, y = 15, rule = B3/S23} \\
\detokenize{4bo15bo$3bobo13bobo$3bobo13bobo$b3ob2o11b2ob3o$o23bo$b3ob2o11b2ob3o$3b
ob2o11b2obo$10bo3bo$11bobo$10b2ob2o$8bo2bobo2bo$7bobobobobobo$7b2o2bo
bo2b2o$11bobo$12bo!}}

\patcols{(p14) Tumbler}{\href{https://conwaylife.com/?rle=2o3b2o$obobobo$obobobo$2bobo$b2ob2o$b2ob2o!&name=(p14) Tumbler}{\includegraphics[width=\textwidth]{p14}}}
{\detokenize{x = 7, y = 6, rule = B3/S23} \\
\detokenize{2o3b2o$obobobo$obobobo$2bobo$b2ob2o$b2ob2o!}}

\patcols{(p15) Pentadecathlon}{\href{https://conwaylife.com/?rle=2bo4bo$2ob4ob2o$2bo4bo!&name=(p15) Pentadecathlon}{\includegraphics[width=\textwidth]{p15}}}
{\detokenize{x = 10, y = 3, rule = B3/S23} \\
\detokenize{2bo4bo$2ob4ob2o$2bo4bo!}}

\patcols{(p16) Two pre-L hassler}{\href{https://conwaylife.com/?rle=4b2o12b2o$4o4b2o4b2o4b4o$3ob2o2b2o4b2o2b2ob3o$5bo12bo2$13b2o$13b2o$13b3o$8b3o$9b2o$9b2o2$5bo12bo$3ob2o2b2o4b2o2b2ob3o$4o4b2o4b2o4b4o$4b2o12b2o!&name=(p16) Two pre-L hassler}{\includegraphics[width=\textwidth]{p16}}}
{\detokenize{x = 24, y = 16, rule = B3/S23} \\
\detokenize{4b2o12b2o$4o4b2o4b2o4b4o$3ob2o2b2o4b2o2b2ob3o$5bo12bo2$13b2o$13b2o$13b
3o$8b3o$9b2o$9b2o2$5bo12bo$3ob2o2b2o4b2o2b2ob3o$4o4b2o4b2o4b4o$4b2o12b
2o!}}

\patcols{(p17) 54P17.1}{\href{https://conwaylife.com/?rle=5bo$4bobo$4bobo3b2o$b2obob2o3bo$2bobo6bob2o$o2bob2ob2obo2bo$2obobobo2bobo$3bob2ob2o2b2o$3bobo3bobo$4b2obobobo$6bobobo$6bobo$7b2o!&name=(p17) 54P17.1}{\includegraphics[width=\textwidth]{p17}}}
{\detokenize{x = 15, y = 13, rule = B3/S23} \\
\detokenize{5bo$4bobo$4bobo3b2o$b2obob2o3bo$2bobo6bob2o$o2bob2ob2obo2bo$2obobobo2b
obo$3bob2ob2o2b2o$3bobo3bobo$4b2obobobo$6bobobo$6bobo$7b2o!}}

\patcols{(p18) 117P18}{\href{https://conwaylife.com/?rle=37b2o$8b2o27bo$9bo25bobo$9bobo23b2o$10b2o14bo4bo$15bo4bo3b2ob4ob2o$13b2ob4ob2o3bo4bo$15bo4bo14b2o$10b2o23bobo$9bobo25bo$9bo27b2o$8b2o$23b2o$18b3o2b2o$18bo$18b2o$14b2o$14b2o$29b2o$2o27bo$bo25bobo$bobo23b2o$2b2o14bo4bo$7bo4bo3b2ob4ob2o$5b2ob4ob2o3bo4bo$7bo4bo14b2o$2b2o23bobo$bobo25bo$bo27b2o$2o!&name=(p18) 117P18}{\includegraphics[width=\textwidth]{p18}}}
{\detokenize{x = 39, y = 30, rule = B3/S23} \\
\detokenize{37b2o$8b2o27bo$9bo25bobo$9bobo23b2o$10b2o14bo4bo$15bo4bo3b2ob4ob2o$13b
2ob4ob2o3bo4bo$15bo4bo14b2o$10b2o23bobo$9bobo25bo$9bo27b2o$8b2o$23b2o
$18b3o2b2o$18bo$18b2o$14b2o$14b2o$29b2o$2o27bo$bo25bobo$bobo23b2o$2b2o
14bo4bo$7bo4bo3b2ob4ob2o$5b2ob4ob2o3bo4bo$7bo4bo14b2o$2b2o23bobo$bobo
25bo$bo27b2o$2o!}}

\patcols{(p19) Cribbage}{\href{https://conwaylife.com/?rle=4b2o$4bo$b2obo10bo$bo2b2o9b3o$3bo2bo11bo$bob4o10b2o$obo$o2b4o12b3o$b2o3bo11bo3bo6b2o$3b2o5b3o4bo5bo4bobo$3bo5bo3bo4bo3bo5bo$bobo4bo5bo4b3o5b2o$b2o6bo3bo11bo3b2o$10b3o12b4o2bo$29bobo$13b2o10b4obo$13bo11bo2bo$14b3o9b2o2bo$16bo10bob2o$27bo$26b2o!&name=(p19) Cribbage}{\includegraphics[width=\textwidth]{p19}}}
{\detokenize{x = 32, y = 21, rule = B3/S23} \\
\detokenize{4b2o$4bo$b2obo10bo$bo2b2o9b3o$3bo2bo11bo$bob4o10b2o$obo$o2b4o12b3o$b2o
3bo11bo3bo6b2o$3b2o5b3o4bo5bo4bobo$3bo5bo3bo4bo3bo5bo$bobo4bo5bo4b3o5b
2o$b2o6bo3bo11bo3b2o$10b3o12b4o2bo$29bobo$13b2o10b4obo$13bo11bo2bo$14b
3o9b2o2bo$16bo10bob2o$27bo$26b2o!}}

\patcols{(p20) octagon on HW emulator}{\href{https://conwaylife.com/?rle=3bo2bo$3bo2bo$b2ob2ob2o$3bo2bo$3bo2bo$b2ob2ob2o$3bo2bo$3bo2bo2$7b2o$2b2obo4bob2o$2bo10bo$3b2o6b2o$3o2b6o2b3o$o2bo8bo2bo$b2o10b2o!&name=(p20) octagon on HW emulator}{\includegraphics[width=\textwidth]{p20}}}
{\detokenize{x = 16, y = 16, rule = B3/S23} \\
\detokenize{3bo2bo$3bo2bo$b2ob2ob2o$3bo2bo$3bo2bo$b2ob2ob2o$3bo2bo$3bo2bo2$7b2o$2b
2obo4bob2o$2bo10bo$3b2o6b2o$3o2b6o2b3o$o2bo8bo2bo$b2o10b2o!}}

\patcols{(p21) jam on 44P7.2}{\href{https://conwaylife.com/?rle=10b2o$4bo4bo2bo$3bobo3bobo$3bobo4bo3b2o$2obob2o4b2obo$ob2o4bo3bo$4b3obo3bo$4bo$5b3o2b2o$8bo$5b2obobobo$5b2obobob3o$9b2o4bo$11b4o$11bo$12bo$11b2o!&name=(p21) jam on 44P7.2}{\includegraphics[width=\textwidth]{p21}}}
{\detokenize{x = 16, y = 17, rule = B3/S23} \\
\detokenize{10b2o$4bo4bo2bo$3bobo3bobo$3bobo4bo3b2o$2obob2o4b2obo$ob2o4bo3bo$4b3ob
o3bo$4bo$5b3o2b2o$8bo$5b2obobobo$5b2obobob3o$9b2o4bo$11b4o$11bo$12bo$
11b2o!}}

\patcols{(p22) 168P22.1}{\href{https://conwaylife.com/?rle=12b2o5b2o$7b2o3bo7bo3b2o$6bobo4bo5bo4bobo$8bob3obo3bob3obo$5b2obobo3bo3bo3bobob2o$8bo5bobobo5bo$7bobo2b2obobob2o2bobo$8bo3bo2bobo2bo3bo$4b2o8b2ob2o8b2o$2o2bo23bo2b2o$obobo5b3o7b3o5bobobo$2bob2o4bobo7bobo4b2obo$obobo5b3o7b3o5bobobo$2o2bo23bo2b2o$4b2o8b2ob2o8b2o$8bo3bo2bobo2bo3bo$7bobo2b2obobob2o2bobo$8bo5bobobo5bo$5b2obobo3bo3bo3bobob2o$8bob3obo3bob3obo$6bobo4bo5bo4bobo$7b2o3bo7bo3b2o$12b2o5b2o!&name=(p22) 168P22.1}{\includegraphics[width=\textwidth]{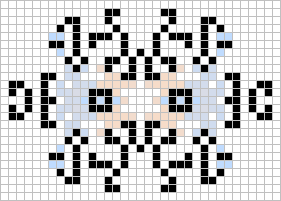}}}
{\detokenize{x = 33, y = 23, rule = B3/S23} \\
\detokenize{12b2o5b2o$7b2o3bo7bo3b2o$6bobo4bo5bo4bobo$8bob3obo3bob3obo$5b2obobo3b
o3bo3bobob2o$8bo5bobobo5bo$7bobo2b2obobob2o2bobo$8bo3bo2bobo2bo3bo$4b
2o8b2ob2o8b2o$2o2bo23bo2b2o$obobo5b3o7b3o5bobobo$2bob2o4bobo7bobo4b2o
bo$obobo5b3o7b3o5bobobo$2o2bo23bo2b2o$4b2o8b2ob2o8b2o$8bo3bo2bobo2bo3b
o$7bobo2b2obobob2o2bobo$8bo5bobobo5bo$5b2obobo3bo3bo3bobob2o$8bob3obo
3bob3obo$6bobo4bo5bo4bobo$7b2o3bo7bo3b2o$12b2o5b2o!}}

\patcols{(p23) David Hilbert}{\href{https://conwaylife.com/?rle=7b2o15b2o$8bo15bo$6bo19bo$6b5o11b5o$10bo11bo$4b4o16b5o$4bo2bo17bo2bo$7bo2bo14b2o$7bo3bo12b3o$7bo3bo2b2ob2o5b2o$8bo2b2ob2ob2o5b2o$10bo12bo$8b2o$7b3o13bo$3b2o2b3o6b2o4bobo3b2o$3bo11bo2bo3bobo4bo$2obo12b2o4b3o4bob2o$ob2ob2o19b2ob2obo$5bo21bo$5bobo17bobo$6b2o17b2o$10bo11bo$6b5o11b5o$6bo19bo$8bo15bo$7b2o15b2o!&name=(p23) David Hilbert}{\includegraphics[width=\textwidth]{p23}}}
{\detokenize{x = 33, y = 26, rule = B3/S23} \\
\detokenize{7b2o15b2o$8bo15bo$6bo19bo$6b5o11b5o$10bo11bo$4b4o16b5o$4bo2bo17bo2bo$
7bo2bo14b2o$7bo3bo12b3o$7bo3bo2b2ob2o5b2o$8bo2b2ob2ob2o5b2o$10bo12bo$
8b2o$7b3o13bo$3b2o2b3o6b2o4bobo3b2o$3bo11bo2bo3bobo4bo$2obo12b2o4b3o4b
ob2o$ob2ob2o19b2ob2obo$5bo21bo$5bobo17bobo$6b2o17b2o$10bo11bo$6b5o11b
5o$6bo19bo$8bo15bo$7b2o15b2o!}}

\patcols{(p24) pulsar on figure eight}{\href{https://conwaylife.com/?rle=4bo5bo$4bo5bo7b2o$4b2o3b2o$16bo3bo$3o2b2ob2o2b3obo4bo$2bobobobobobo5bobobo$4b2o3b2o8bobobo$20bo4bo$4b2o3b2o10bo3bo$2bobobobobobo$3o2b2ob2o2b3o7b2o2$4b2o3b2o$4bo5bo$4bo5bo!&name=(p24) pulsar on figure eight}{\includegraphics[width=\textwidth]{p24}}}
{\detokenize{x = 26, y = 15, rule = B3/S23} \\
\detokenize{4bo5bo$4bo5bo7b2o$4b2o3b2o$16bo3bo$3o2b2ob2o2b3obo4bo$2bobobobobobo5bo
bobo$4b2o3b2o8bobobo$20bo4bo$4b2o3b2o10bo3bo$2bobobobobobo$3o2b2ob2o2b
3o7b2o2$4b2o3b2o$4bo5bo$4bo5bo!}}

\patcols{(p25) 134P25}{\href{https://conwaylife.com/?rle=11b2o$7b2o2bo$6bo2bobo$2b2obob3ob2o$2bob2obobobo$13bo$3b4obob3obo$2bo3b2ob2o3bo$3b3ob3ob3o$5bo5bo7bo$17b3o$6bo3bo5bo$8bo7b2o$2o$bo$bobo9b3o$2b2o9bobo$7b3o3b3o$7bobo9b2o$7b3o9bobo$21bo$21b2o$5b2o7bo$6bo5bo3bo$3b3o$3bo7bo5bo$9b3ob3ob3o$8bo3b2ob2o3bo$8bob3obob4o$9bo$11bobobob2obo$10b2ob3obob2o$11bobo2bo$11bo2b2o$10b2o!&name=(p25) 134P25}{\includegraphics[width=\textwidth]{p25}}}
{\detokenize{x = 23, y = 35, rule = B3/S23} \\
\detokenize{11b2o$7b2o2bo$6bo2bobo$2b2obob3ob2o$2bob2obobobo$13bo$3b4obob3obo$2bo
3b2ob2o3bo$3b3ob3ob3o$5bo5bo7bo$17b3o$6bo3bo5bo$8bo7b2o$2o$bo$bobo9b3o
$2b2o9bobo$7b3o3b3o$7bobo9b2o$7b3o9bobo$21bo$21b2o$5b2o7bo$6bo5bo3bo$
3b3o$3bo7bo5bo$9b3ob3ob3o$8bo3b2ob2o3bo$8bob3obob4o$9bo$11bobobob2obo$
10b2ob3obob2o$11bobo2bo$11bo2b2o$10b2o!}}

\patcols{(p26) p26 pre-pulsar shuttle}{\href{https://conwaylife.com/?rle=16b2o3b2o$15bo2bobo2bo$11b2o3b2o3b2o3b2o$11bo15bo$8b2obo15bob2o$7bobob2o13b2obobo$7bobo5b3o3b3o5bobo$5b2o2bo5bobo3bobo5bo2b2o$4bo4b2o4b3o3b3o4b2o4bo$4b5o21b5o$8bo21bo$2b4o27b4o$2bo2bo27bo2bo2$16b2o3b2o$bo4b3o8bo3bo8b3o4bo$obo3bobo5bo9bo5bobo3bobo$obo3b3o5b2o7b2o5b3o3bobo$bo35bo2$bo35bo$obo3b3o5b2o7b2o5b3o3bobo$obo3bobo5bo9bo5bobo3bobo$bo4b3o8bo3bo8b3o4bo$16b2o3b2o2$2bo2bo27bo2bo$2b4o27b4o$8bo21bo$4b5o21b5o$4bo4b2o4b3o3b3o4b2o4bo$5b2o2bo5bobo3bobo5bo2b2o$7bobo5b3o3b3o5bobo$7bobob2o13b2obobo$8b2obo15bob2o$11bo15bo$11b2o3b2o3b2o3b2o$15bo2bobo2bo$16b2o3b2o!&name=p26 pre-pulsar shuttle}{\includegraphics[width=\textwidth]{p26}}}
{\detokenize{x = 39, y = 39, rule = B3/S23} \\
\detokenize{16b2o3b2o$15bo2bobo2bo$11b2o3b2o3b2o3b2o$11bo15bo$8b2obo15bob2o$7bobo
b2o13b2obobo$7bobo5b3o3b3o5bobo$5b2o2bo5bobo3bobo5bo2b2o$4bo4b2o4b3o3b
3o4b2o4bo$4b5o21b5o$8bo21bo$2b4o27b4o$2bo2bo27bo2bo2$16b2o3b2o$bo4b3o
8bo3bo8b3o4bo$obo3bobo5bo9bo5bobo3bobo$obo3b3o5b2o7b2o5b3o3bobo$bo35b
o2$bo35bo$obo3b3o5b2o7b2o5b3o3bobo$obo3bobo5bo9bo5bobo3bobo$bo4b3o8bo
3bo8b3o4bo$16b2o3b2o2$2bo2bo27bo2bo$2b4o27b4o$8bo21bo$4b5o21b5o$4bo4b
2o4b3o3b3o4b2o4bo$5b2o2bo5bobo3bobo5bo2b2o$7bobo5b3o3b3o5bobo$7bobob2o
13b2obobo$8b2obo15bob2o$11bo15bo$11b2o3b2o3b2o3b2o$15bo2bobo2bo$16b2o
3b2o!}}

\patcols{(p27) 123P27.1}{\href{https://conwaylife.com/?rle=b2ob2o$2bobobob2o$bo4bobo$ob4o4bo$o4bo3b4obo$b4o3bo4b2o$3bo4b4o3b2o$5bobo4b2o3bo$4b2obo2b2o2b2obo$3bo3bobo2bobobo$4b3o2b2obo6bo$7b2o3bo2b5o$6bo2bob2obobo$5bob2o3bo2bo2b2o$6bo2b2o3bobo2bo$7b2o3bobob2o$9b4obobo$9bo2bobobo$10b5ob2o$15bo2bo$9bob4o2b2o$9b2obo!&name=(p27) 123P27.1}{\includegraphics[width=\textwidth]{p27}}}
{\detokenize{x = 20, y = 22, rule = B3/S23} \\
\detokenize{b2ob2o$2bobobob2o$bo4bobo$ob4o4bo$o4bo3b4obo$b4o3bo4b2o$3bo4b4o3b2o$5b
obo4b2o3bo$4b2obo2b2o2b2obo$3bo3bobo2bobobo$4b3o2b2obo6bo$7b2o3bo2b5o$
6bo2bob2obobo$5bob2o3bo2bo2b2o$6bo2b2o3bobo2bo$7b2o3bobob2o$9b4obobo$
9bo2bobobo$10b5ob2o$15bo2bo$9bob4o2b2o$9b2obo!}}

\patcols{(p28) newshuttle}{\href{https://conwaylife.com/?rle=26b2o$20bo3bo2bo2bo$18b3o3b3o3b3o$8b2o7bo15bo7b2o$9bo7b2o5b3o5b2o7bo$9bobo11bo3bo11bobo$10b2o11b2ob2o11b2o2$3bo43bo$3b3o39b3o$6bo7b3o17b3o7bo$5b2o7bobo17bobo7b2o$14b3o3b3o5b3o3b3o$20bobo5bobo$10b3o7b3o5b3o7b3o$10bobo25bobo$10b3o25b3o$3b2o18b2ob2o18b2o$2bobo18bo3bo18bobo$2bo21b3o21bo$b2o9b3o11bobo7b3o9b2o$12bobo12b2o7bobo$12b3o5b2o14b3o$2o3b2o10b2o2bo10b2o10b2o$obobobo10bob2o10bobo10bobob2o$2bobo14bo11bo14bobo$b2obobo10bobo10b2obo10bobobobo$5b2o10b2o10bo2b2o10b2o3b2o$12b3o14b2o5b3o$12bobo7b2o12bobo$b2o9b3o7bobo11b3o9b2o$2bo21b3o21bo$2bobo18bo3bo18bobo$3b2o18b2ob2o18b2o$10b3o25b3o$10bobo25bobo$10b3o7b3o5b3o7b3o$20bobo5bobo$14b3o3b3o5b3o3b3o$5b2o7bobo17bobo7b2o$6bo7b3o17b3o7bo$3b3o39b3o$3bo43bo2$10b2o11b2ob2o11b2o$9bobo11bo3bo11bobo$9bo7b2o5b3o5b2o7bo$8b2o7bo15bo7b2o$18b3o3b3o3b3o$20bo2bo2bo3bo$23b2o!&name=(p28) newshuttle}{\includegraphics[width=\textwidth]{p28}}}
{\detokenize{x = 51, y = 51, rule = B3/S23} \\
\detokenize{26b2o$20bo3bo2bo2bo$18b3o3b3o3b3o$8b2o7bo15bo7b2o$9bo7b2o5b3o5b2o7bo$
9bobo11bo3bo11bobo$10b2o11b2ob2o11b2o2$3bo43bo$3b3o39b3o$6bo7b3o17b3o
7bo$5b2o7bobo17bobo7b2o$14b3o3b3o5b3o3b3o$20bobo5bobo$10b3o7b3o5b3o7b
3o$10bobo25bobo$10b3o25b3o$3b2o18b2ob2o18b2o$2bobo18bo3bo18bobo$2bo21b
3o21bo$b2o9b3o11bobo7b3o9b2o$12bobo12b2o7bobo$12b3o5b2o14b3o$2o3b2o10b
2o2bo10b2o10b2o$obobobo10bob2o10bobo10bobob2o$2bobo14bo11bo14bobo$b2ob
obo10bobo10b2obo10bobobobo$5b2o10b2o10bo2b2o10b2o3b2o$12b3o14b2o5b3o$
12bobo7b2o12bobo$b2o9b3o7bobo11b3o9b2o$2bo21b3o21bo$2bobo18bo3bo18bobo
$3b2o18b2ob2o18b2o$10b3o25b3o$10bobo25bobo$10b3o7b3o5b3o7b3o$20bobo5bo
bo$14b3o3b3o5b3o3b3o$5b2o7bobo17bobo7b2o$6bo7b3o17b3o7bo$3b3o39b3o$3bo
43bo2$10b2o11b2ob2o11b2o$9bobo11bo3bo11bobo$9bo7b2o5b3o5b2o7bo$8b2o7bo
15bo7b2o$18b3o3b3o3b3o$20bo2bo2bo3bo$23b2o!}}

\patcols{(p29) p29 pre-pulsar shuttle}{\href{https://conwaylife.com/?rle=15bo$13b3o$12bo$12b2o2$bo$obo6b3o$bo7bobo3b2o$9b3o3b2o3$19b2o$9b3o7b2o5b2o$bo7bobo14bo$obo6b3o12bobo$bo22b2o$13b3o3b3o$13bobo3bobo$13b3o3b3o7$13bo7bo$12bobo5bobo$13bo7bo!&name=p29 pre-pulsar shuttle}{\includegraphics[width=\textwidth]{p29}}}
{\detokenize{x = 28, y = 28, rule = B3/S23} \\
\detokenize{15bo$13b3o$12bo$12b2o2$bo$obo6b3o$bo7bobo3b2o$9b3o3b2o3$19b2o$9b3o7b2o
5b2o$bo7bobo14bo$obo6b3o12bobo$bo22b2o$13b3o3b3o$13bobo3bobo$13b3o3b3o
7$13bo7bo$12bobo5bobo$13bo7bo!}}

\patcols{(p30) queen bee shuttle}{\href{https://conwaylife.com/?rle=9b2o$9bobo$4b2o6bo7b2o$2obo2bo2bo2bo7b2o$2o2b2o6bo$9bobo$9b2o!&name=(p30) queen bee shuttle}{\includegraphics[width=\textwidth]{p30}}}
{\detokenize{x = 7, y = 22, rule = B3/S23} \\
\detokenize{9b2o$9bobo$4b2o6bo7b2o$2obo2bo2bo2bo7b2o$2o2b2o6bo$9bobo$9b2o!}}

\patcols{(p31) Merzenich's p31}{\href{https://conwaylife.com/?rle=7b2obo2bob2o$2o4bo2bo4bo2bo4b2o$2o5bobo4bobo5b2o$8bo6bo6$8bo6bo$2o5bobo4bobo5b2o$2o4bo2bo4bo2bo4b2o$7b2obo2bob2o!&name=Merzenich's p31}{\includegraphics[width=\textwidth]{p31}}}
{\detokenize{x = 24, y = 13, rule = B3/S23} \\
\detokenize{7b2obo2bob2o$2o4bo2bo4bo2bo4b2o$2o5bobo4bobo5b2o$8bo6bo6$8bo6bo$2o5bob
o4bobo5b2o$2o4bo2bo4bo2bo4b2o$7b2obo2bob2o!}}

\patcols{(p32) gourmet}{\href{https://conwaylife.com/?rle=10b2o$10bo$4b2ob2obo4b2o$2bo2bobobo5bo$2b2o4bo8bo$16b2o2$16b2o$o9b3o2bobo$3o7bobo3bo$3bo6bobo4b3o$2bobo14bo$2b2o2$2b2o$2bo8bo4b2o$4bo5bobobo2bo$3b2o4bob2ob2o$9bo$8b2o!&name=(p32) gourmet}{\includegraphics[width=\textwidth]{p32}}}
{\detokenize{x = 20, y = 20, rule = B3/S23} \\
\detokenize{10b2o$10bo$4b2ob2obo4b2o$2bo2bobobo5bo$2b2o4bo8bo$16b2o2$16b2o$o9b3o2b
obo$3o7bobo3bo$3bo6bobo4b3o$2bobo14bo$2b2o2$2b2o$2bo8bo4b2o$4bo5bobob
o2bo$3b2o4bob2ob2o$9bo$8b2o!}}

\patcols{(p33) 258P3 on Achim's p11}{\href{https://conwaylife.com/?rle=19b2o$19b2o$14bo10bo$13bobo8bobo$12bo2bo3b2o3bo2bo$11bo6b4o6bo$12b2o4bo2bo4b2o3$14b2o8b2o$9b2o2b2o10b2o2b2o$9b2o2b2o10b2o2b2o$14b2o8b2o3$12b2o4bo2bo4b2o$11bo6b4o6bo$12bo2bo3b2o3bo2bo$13bobo8bobo$14bo10bo$19b2o$14bo4b2o3$3bo3bobob6obobo3bo$2bob2obo2bobo2bobo2bob2obo$2bo7b2o4b2o7bo$b2o2bo3bo2b4o2bo3bo2b2o$o2b6ob2o4b2ob6o2bo$2o6b2ob6ob2o6b2o$2b2obo16bob2o$2o2bo8b2o8bo2b2o$bobo2b3o3bo2bo3b3o2bobo$o2b3o5b2o2b2o5b3o2bo$b2o3bob2obo4bob2obo3b2o$2bobo2bob3o4b3obo2bobo$2bob2obo3b6o3bob2obo$b2o2bobobobo4bobobobo2b2o$o2bo7b6o7bo2bo$b2o3bo14bo3b2o$5bo3b4o2b4o3bo$4bob2obo2bo2bo2bob2obo$4bo2b3o8b3o2bo$b2obo2b3o2bo2bo2b3o2bob2o$b2obob2o4b4o4b2obob2o$5bo3bo8bo3bo$6bobo10bobo$4bobobobo6bobobobo$4b2o3b2o6b2o3b2o!&name=(p33) 258P3 on Achim's p11}{\includegraphics[width=\textwidth]{p33}}}
{\detokenize{x = 31, y = 49, rule = B3/S23} \\
\detokenize{19b2o$19b2o$14bo10bo$13bobo8bobo$12bo2bo3b2o3bo2bo$11bo6b4o6bo$12b2o4b
o2bo4b2o3$14b2o8b2o$9b2o2b2o10b2o2b2o$9b2o2b2o10b2o2b2o$14b2o8b2o3$12b
2o4bo2bo4b2o$11bo6b4o6bo$12bo2bo3b2o3bo2bo$13bobo8bobo$14bo10bo$19b2o$
14bo4b2o3$3bo3bobob6obobo3bo$2bob2obo2bobo2bobo2bob2obo$2bo7b2o4b2o7bo
$b2o2bo3bo2b4o2bo3bo2b2o$o2b6ob2o4b2ob6o2bo$2o6b2ob6ob2o6b2o$2b2obo16b
ob2o$2o2bo8b2o8bo2b2o$bobo2b3o3bo2bo3b3o2bobo$o2b3o5b2o2b2o5b3o2bo$b2o
3bob2obo4bob2obo3b2o$2bobo2bob3o4b3obo2bobo$2bob2obo3b6o3bob2obo$b2o2b
obobobo4bobobobo2b2o$o2bo7b6o7bo2bo$b2o3bo14bo3b2o$5bo3b4o2b4o3bo$4bob
2obo2bo2bo2bob2obo$4bo2b3o8b3o2bo$b2obo2b3o2bo2bo2b3o2bob2o$b2obob2o4b
4o4b2obob2o$5bo3bo8bo3bo$6bobo10bobo$4bobobobo6bobobobo$4b2o3b2o6b2o3b
2o!}}

\patcols{(p34) 74P34}{\href{https://conwaylife.com/?rle=15bo$13b3o$3b2o7bo$4bo7b2o$4bobo$5b2o2$2o$bo$bobo$2b2o11bo$15bo9b2o$9b3o4bo8bo$9bobo11bobo$9bobo11b2o2$2b2o11bobo$bobo11bobo$bo8bo4b3o$2o9bo$11bo11b2o$23bobo$25bo$25b2o2$20b2o$20bobo$13b2o7bo$14bo7b2o$11b3o$11bo!&name=(p34) 74P34}{\includegraphics[width=\textwidth]{p34}}}
{\detokenize{x = 27, y = 31, rule = B3/S23} \\
\detokenize{15bo$13b3o$3b2o7bo$4bo7b2o$4bobo$5b2o2$2o$bo$bobo$2b2o11bo$15bo9b2o$9b
3o4bo8bo$9bobo11bobo$9bobo11b2o2$2b2o11bobo$bobo11bobo$bo8bo4b3o$2o9bo
$11bo11b2o$23bobo$25bo$25b2o2$20b2o$20bobo$13b2o7bo$14bo7b2o$11b3o$11b
o!}}

\patcols{(p35) toaster on 44P7.2}{\href{https://conwaylife.com/?rle=16bo6b2o$15bobob2o2bo$4bo10bobobobobo$3bobo8b2obo3bob2o$3bobo6bo3b2obob2o3bo$2obob2o8bo7bo$ob2o4bo6bo7bo$4b3obo3bo3b2obob2o3bo$4bo9b2obo3bob2o$5b3o2b2o3bobobobobo$8bo6bobob2o2bo$5b2obobobo3bo6b2o$5b2obobob4o$9b2o$11b3o$11bo2bo$13b2o!&name=(p35) toaster on 44P7.2}{\includegraphics[width=\textwidth]{p35}}}
{\detokenize{x = 25, y = 17, rule = B3/S23} \\
\detokenize{16bo6b2o$15bobob2o2bo$4bo10bobobobobo$3bobo8b2obo3bob2o$3bobo6bo3b2obo
b2o3bo$2obob2o8bo7bo$ob2o4bo6bo7bo$4b3obo3bo3b2obob2o3bo$4bo9b2obo3bob
2o$5b3o2b2o3bobobobobo$8bo6bobob2o2bo$5b2obobobo3bo6b2o$5b2obobob4o$9b
2o$11b3o$11bo2bo$13b2o!}}

\patcols{(p36) p36 toad hassler}{\href{https://conwaylife.com/?rle=2o16b2o$bo7b2o7bo$bobo3bo4bo3bobo$2b2o2bo6bo2b2o$5bo8bo$5bo8bo$5bo8bo$2b2o2bo6bo2b2o$bobo3bo4bo3bobo$bo7b2o7bo$2o16b2o$11bo$9bo2bo$9bo2bo$10bo$2o16b2o$bo7b2o7bo$bobo3bo4bo3bobo$2b2o2bo6bo2b2o$5bo8bo$5bo8bo$5bo8bo$2b2o2bo6bo2b2o$bobo3bo4bo3bobo$bo7b2o7bo$2o16b2o!&name=p36 toad hassler}{\includegraphics[width=\textwidth]{p36}}}
{\detokenize{x = 20, y = 26, rule = B3/S23} \\
\detokenize{2o16b2o$bo7b2o7bo$bobo3bo4bo3bobo$2b2o2bo6bo2b2o$5bo8bo$5bo8bo$5bo8bo$
2b2o2bo6bo2b2o$bobo3bo4bo3bobo$bo7b2o7bo$2o16b2o$11bo$9bo2bo$9bo2bo$
10bo$2o16b2o$bo7b2o7bo$bobo3bo4bo3bobo$2b2o2bo6bo2b2o$5bo8bo$5bo8bo$5b
o8bo$2b2o2bo6bo2b2o$bobo3bo4bo3bobo$bo7b2o7bo$2o16b2o!}}

\patcols{(p37) Beluchenko's p37}{\href{https://conwaylife.com/?rle=11b2o11b2o$11b2o11b2o3$6bo23bo$5bobo6b3o3b3o6bobo$4bo2bo5bo9bo5bo2bo$5b2o6bo2bo3bo2bo6b2o$14b2o5b2o3$2o33b2o$2o33b2o$6b2o21b2o$5bo2bo19bo2bo$5bo2bo19bo2bo$5bobo21bobo4$5bobo21bobo$5bo2bo19bo2bo$5bo2bo19bo2bo$6b2o21b2o$2o33b2o$2o33b2o3$14b2o5b2o$5b2o6bo2bo3bo2bo6b2o$4bo2bo5bo9bo5bo2bo$5bobo6b3o3b3o6bobo$6bo23bo3$11b2o11b2o$11b2o11b2o!&name=Beluchenko's p37}{\includegraphics[width=\textwidth]{p37}}}
{\detokenize{x = 37, y = 37, rule = B3/S23} \\
\detokenize{11b2o11b2o$11b2o11b2o3$6bo23bo$5bobo6b3o3b3o6bobo$4bo2bo5bo9bo5bo2bo$
5b2o6bo2bo3bo2bo6b2o$14b2o5b2o3$2o33b2o$2o33b2o$6b2o21b2o$5bo2bo19bo2b
o$5bo2bo19bo2bo$5bobo21bobo4$5bobo21bobo$5bo2bo19bo2bo$5bo2bo19bo2bo$
6b2o21b2o$2o33b2o$2o33b2o3$14b2o5b2o$5b2o6bo2bo3bo2bo6b2o$4bo2bo5bo9bo
5bo2bo$5bobo6b3o3b3o6bobo$6bo23bo3$11b2o11b2o$11b2o11b2o!}}

\patcols{(p38) Raucci's p38}{\href{https://conwaylife.com/?rle=4bo32bo$4bo32bo$4bo32bo$6b2o3bo18bo3b2o$3o3b2o3b2o16b2o3b2o3b3o$13bo14bo$12b2o14b2o3$6b2o26b2o$6bo28bo$7b3o22b3o$9bo22bo!&name=(p38) Raucci's p38}{\includegraphics[width=\textwidth]{p38}}}
{\detokenize{x = 42, y = 13, rule = B3/S23} \\
\detokenize{4bo32bo$4bo32bo$4bo32bo$6b2o3bo18bo3b2o$3o3b2o3b2o16b2o3b2o3b3o$13bo
14bo$12b2o14b2o3$6b2o26b2o$6bo28bo$7b3o22b3o$9bo22bo!}}

\patcols{(p39) 134P39.1}{\href{https://conwaylife.com/?rle=7b2o3b2o$7bo3bobo$4b2obo2bo2bobo$4bo2b2o3bo2b3o$6bo3bo2bo4bo$7b3obob2ob3o$9bo5bo$10bo4bob2o$4b2o5b2ob2ob2o$2obobo3bo2bobo$2obob3ob2obobo10bo$3bo2b2o3bobo10bobo$3bobo2b2o2bo11bobo$4bobo3b2o11b2ob2o$6bobo$6bobob2o11b2ob2o$7b2ob2o11b2obo$15bo3bo8bo$16bobo8b2o$15b2ob2o$13bo2bobo2bo$12bobobobobobo$12b2o2bobo2b2o$16bobo$17bo!&name=(p39) 134P39.1}{\includegraphics[width=\textwidth]{p39}}}
{\detokenize{x = 29, y = 25, rule = B3/S23} \\
\detokenize{7b2o3b2o$7bo3bobo$4b2obo2bo2bobo$4bo2b2o3bo2b3o$6bo3bo2bo4bo$7b3obob2o
b3o$9bo5bo$10bo4bob2o$4b2o5b2ob2ob2o$2obobo3bo2bobo$2obob3ob2obobo10bo
$3bo2b2o3bobo10bobo$3bobo2b2o2bo11bobo$4bobo3b2o11b2ob2o$6bobo$6bobob
2o11b2ob2o$7b2ob2o11b2obo$15bo3bo8bo$16bobo8b2o$15b2ob2o$13bo2bobo2bo$
12bobobobobobo$12b2o2bobo2b2o$16bobo$17bo!}}

\patcols{(p40) p40 B-heptomino shuttle}{\href{https://conwaylife.com/?rle=b2o20b2o$b2o20bobo$24b3o$25b2o$3o19b2o$2o20b3o$3b2o$2b3o$bobo3b2o2b2o10b2o$b2o3bo8b2o6b2o$7b2o7b2o$15b2o$11bo3bo$11bo$7bo4bo5bo$6bobo8bobo$7bo10bo$11bo2bo$9bo2b2o2bo$9bo2b2o2bo$11b4o$9bobo2bobo$9b2o4b2o!&name=p40 B-heptomino shuttle}{\includegraphics[width=\textwidth]{p40}}}
{\detokenize{x = 27, y = 23, rule = B3/S23} \\
\detokenize{b2o20b2o$b2o20bobo$24b3o$25b2o$3o19b2o$2o20b3o$3b2o$2b3o$bobo3b2o2b2o
10b2o$b2o3bo8b2o6b2o$7b2o7b2o$15b2o$11bo3bo$11bo$7bo4bo5bo$6bobo8bobo$
7bo10bo$11bo2bo$9bo2b2o2bo$9bo2b2o2bo$11b4o$9bobo2bobo$9b2o4b2o!}}

\patcols{(p41) 204P41}{\href{https://conwaylife.com/?rle=34bo$32b3o$31bo$16b2o13b2o$17bo$17bobo$18b2o$29bo$28bobo$27bo3bo$27bo3bo$27bo3bo$28bobo$29bo2$5b2o24b2o$6bo24bo8bobo$4bo14b2o11b3o6b2o$2b4o10b2o2bo13bo6bo$bo12bob2o3b4o$o2b3o8b2o3bo3b2obo$b2o2bo8bob2o3bo3b2o$3b2o5b2o4b4o3b2obo$3bo5b2o9bo2b2o$4bo6bo8b2o$b3o$bo5b2o15bo27bo$8bo14bobo24bobo$5b3o15b2o26b2o$5bo2$67bo$20b2o26b2o15b3o$20bobo24bobo14bo$20bo27bo15b2o5bo$69b3o$51b2o8bo6bo$48b2o2bo9b2o5bo$46bob2o3b4o4b2o5b2o$46b2o3bo3b2obo8bo2b2o$46bob2o3bo3b2o8b3o2bo$48b4o3b2obo12bo$31bo6bo13bo2b2o10b4o$30b2o6b3o11b2o14bo$30bobo8bo24bo$40b2o24b2o2$43bo$42bobo$41bo3bo$41bo3bo$41bo3bo$42bobo$43bo$53b2o$53bobo$55bo$40b2o13b2o$41bo$38b3o$38bo!&name=(p41) 204P41}{\includegraphics[width=\textwidth]{p41}}}
{\detokenize{x = 73, y = 61, rule = B3/S23} \\
\detokenize{34bo$32b3o$31bo$16b2o13b2o$17bo$17bobo$18b2o$29bo$28bobo$27bo3bo$27bo
3bo$27bo3bo$28bobo$29bo2$5b2o24b2o$6bo24bo8bobo$4bo14b2o11b3o6b2o$2b4o
10b2o2bo13bo6bo$bo12bob2o3b4o$o2b3o8b2o3bo3b2obo$b2o2bo8bob2o3bo3b2o$
3b2o5b2o4b4o3b2obo$3bo5b2o9bo2b2o$4bo6bo8b2o$b3o$bo5b2o15bo27bo$8bo14b
obo24bobo$5b3o15b2o26b2o$5bo2$67bo$20b2o26b2o15b3o$20bobo24bobo14bo$
20bo27bo15b2o5bo$69b3o$51b2o8bo6bo$48b2o2bo9b2o5bo$46bob2o3b4o4b2o5b2o
$46b2o3bo3b2obo8bo2b2o$46bob2o3bo3b2o8b3o2bo$48b4o3b2obo12bo$31bo6bo
13bo2b2o10b4o$30b2o6b3o11b2o14bo$30bobo8bo24bo$40b2o24b2o2$43bo$42bobo
$41bo3bo$41bo3bo$41bo3bo$42bobo$43bo$53b2o$53bobo$55bo$40b2o13b2o$41bo
$38b3o$38bo!}}

\patcols{(p42) unix on 44P7.2}{\href{https://conwaylife.com/?rle=16b2o$16bo2bo2$4bo$3bobo10bob2o$3bobo5b2o2bobo$2obob2o4bo4bo$ob2o4bo6bo$4b3obo3bo2bo$4bo$5b3o2b2o$8bo$5b2obobobo$5b2obobob3o$9b2o4bo$11b4o$11bo$12bo$11b2o!&name=(p42) unix on 44P7.2}{\includegraphics[width=\textwidth]{p42}}}
{\detokenize{x = 20, y = 19, rule = B3/S23} \\
\detokenize{16b2o$16bo2bo2$4bo$3bobo10bob2o$3bobo5b2o2bobo$2obob2o4bo4bo$ob2o4bo6b
o$4b3obo3bo2bo$4bo$5b3o2b2o$8bo$5b2obobobo$5b2obobob3o$9b2o4bo$11b4o$
11bo$12bo$11b2o!}}

For periods 20, 21, and 24, the precise form of the first LCM oscillators is unknown: the patterns provided are the earliest that could have been constructed.

\printbibliography

\end{document}